\newcommand{\Z}{\mathbb{Z}}                     % the integer numbers
\newcommand{\R}{\mathbb{R}}                     % the real line
\newtheorem{mainthm}{\sc Theorem}           % numbered absolutely
\newtheorem{thm}{\sc Theorem}[section]               % numbered absolutely
\newtheorem*{thm*}{\sc Theorem}               % no number
\newtheorem*{cor*}{\sc Corollary}        % no number
\newtheorem{lem}[thm]{\sc Lemma}            % numbered along with Theorem
\newtheorem{rem}[thm]{\sc Remark}           % numbered along with Theorem  
\title{Sharp systolic inequalities for Riemannian and Finsler spheres of revolution} 
\author{Alberto Abbondandolo, Barney Bramham, \\ Umberto L.~Hryniewicz, Pedro A.~S.~Salom\~ao}
\begin{document}

\maketitle

\begin{abstract}
We prove that the systolic ratio of a sphere of revolution $S$ does not exceed $\pi$ and equals $\pi$ if and only if $S$ is Zoll. More generally, we consider the rotationally symmetric Finsler metrics on a sphere of revolution which are defined by shifting the tangent unit circles by a Killing vector field. We prove that in this class of metrics the systolic ratio does not exceed $\pi$ and equals $\pi$ if and only if the metric is Riemannian and Zoll.
\end{abstract}

\section*{Introduction}

The systolic ratio of a Riemannian (two-dimensional) sphere $S$ is the positive number
\[
\rho_{\mathrm{sys}}(S) := \frac{\ell_{\min}(S)^2}{\mathrm{area}(S)},
\]
where $\ell_{\min}(S)$ denotes the length of the shortest non-constant closed geodesic on $S$ and $\mathrm{area}(S)$ is the Riemannian area of $S$. This number is clearly invariant by isometries and rescaling. Moreover, the systolic ratio is a dynamical invariant: If the geodesic flows of two Riemannian spheres $S$ and $S'$ are smoothly conjugate, then $\rho_{\mathrm{sys}}(S)=\rho_{\mathrm{sys}}(S')$. Here we recall that the geodesic flow of $S$ is the flow on the unit tangent bundle $T^1 S$ whose time $t$-maps send the unit tangent vector $u$ into the velocity vector $\dot{\gamma}(t)$ of the geodesic $\gamma$ such that $\dot\gamma(0)=u$.  Actually, $\ell_{\min}$ is a dynamical invariant, just because this number is the shortest period of a closed orbit of the geodesic flow, and the Riemannian area of a closed surface (or more generally the volume of a closed Riemannian manifold) is a $C^1$-conjugacy invariant of the geodesic flow (see \cite[Proposition 1.2]{ck94}). 

Clearly, the systolic ratio of a Riemannian sphere can be arbitrarily small. In \cite{cro88}, Croke proved that $\rho_{\mathrm{sys}}$ is bounded from above on the space of all Riemannian spheres. The value of the supremum  is not known, but it is conjectured to be $2\sqrt{3}= 3.46\dots$. This number is the systolic ratio of the sphere with three conical singularities which is obtained by gluing two flat equilateral triangles by their sides and is known as the Calabi-Croke sphere, see \cite{cro88,ck03}. The best known upper bound for $\rho_{\mathrm{sys}}$ on Riemannian spheres is 32 and is due to Rotmann, see \cite{rot06}.

The value of the systolic ratio on a round sphere (of any radius) is $\pi$. Actually, all Zoll spheres have systolic ratio $\pi$. We recall that a Riemannian sphere is called Zoll if all its geodesics are closed and have the same length. This is actually equivalent to asking that all the geodesics are closed, see \cite{gg81}. Zoll spheres are named after Otto Zoll, who exhibited the first example of an analytic surface of revolution with this property which is not isometric to a round sphere, see \cite{zol03}. Zoll spheres form a huge infinite dimensional space inside Riemannian spheres, whose structure is only partially understood, see \cite{gui76}. The fact that all Zoll spheres have systolic ratio $\pi$ has been known for a long time, see \cite{wei74}, but can also be deduced from the fact that the geodesic flow of a Zoll sphere is smoothly conjugate to the one of the round sphere whose geodesics have the same length, see \cite[Theorem B.1]{abhs17}.

Zoll spheres are local maximizers of the systolic ratio in the $C^2$-topology of Riemannian metrics: There exists a $C^2$ neighborhood $\mathcal{U}$ of the space of Zoll metrics on $S^2$ such that any sphere $S$ whose metric is in $\mathcal{U}$ satisfies $\rho_{\mathrm{sys}}(S)\leq \pi$, with the equality holding if and only if $S$ is Zoll. This local maximality property was conjectured for the round metric by Babenko and Balacheff, see \cite{bal06}, and then proved by the authors in \cite{abhs17} for positively curved spheres satisfying a pinching condition and in \cite[Corollary 4]{abhs18} for arbitrary spheres. Actually, the local maximality of the systolic ratio proved in  the latter paper is with respect to the $C^3$ topology of Finsler metrics. In the case of Riemannian metrics, or more generally reversible Finsler metrics, the proof from \cite{abhs18} can be modified in order to to guarantee local maximality in the $C^2$ topology. Details about this will appear elsewhere.

In this paper, we focus our attention on spheres of revolution. By a sphere of revolution we mean here a smooth surface $S$ in $\R^3$ which is diffeomorphic to a sphere and is invariant with respect to the rotations around the $z$-axis. Such a surface is uniquely determined by its intersection with any plane containing the $z$-axis, which is necessarily a smooth embedded closed curve, symmetric with respect to the $z$-axis. See Figure~1. The first result of this paper is the following:

\begin{figure}
\begin{center}
\includegraphics[scale=0.75]{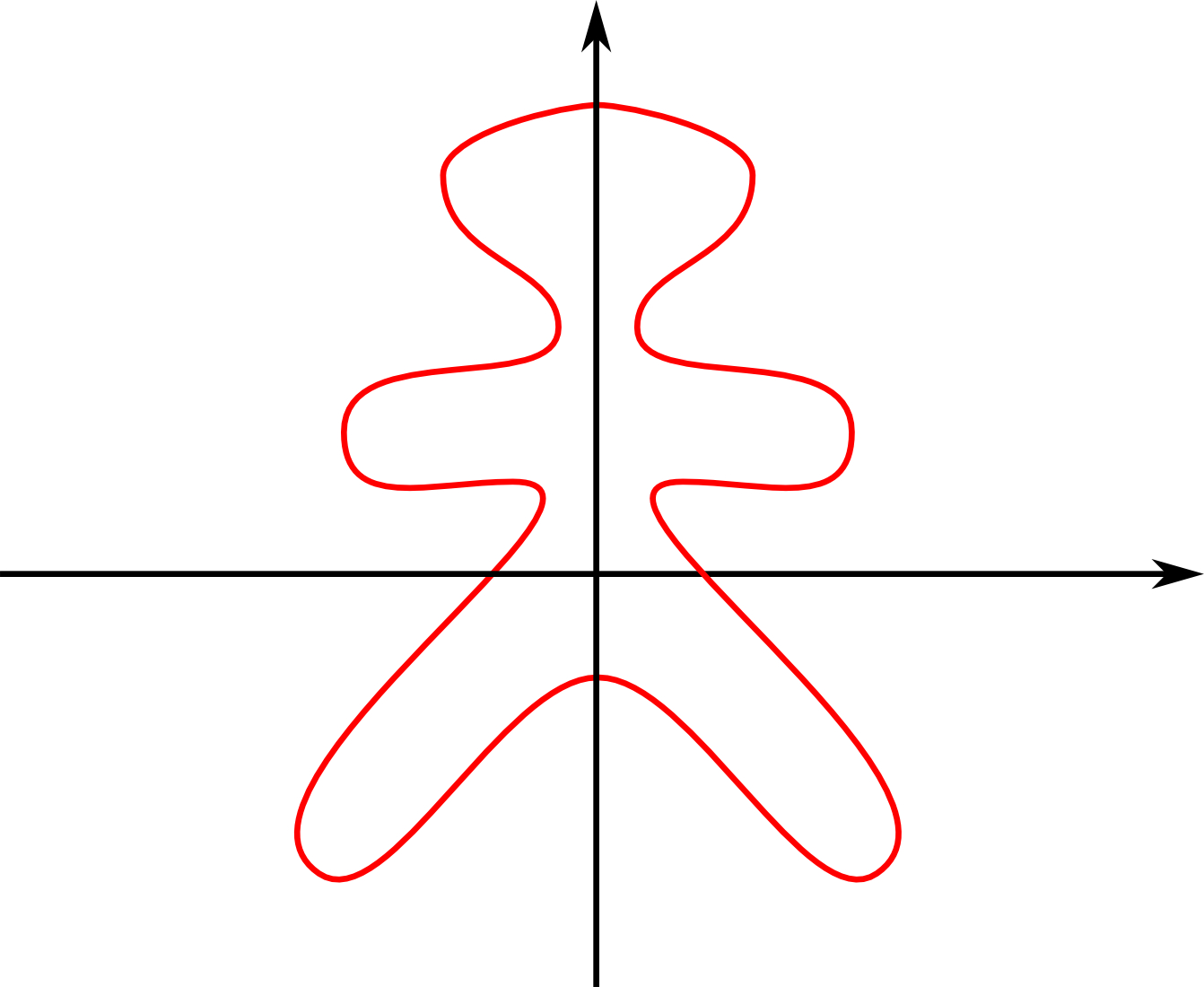}
\caption{A symmetric curve.}
\end{center}
\end{figure}

\begin{mainthm}
\label{mainthm1}
The systolic ratio of a sphere of revolution $S$ does not exceed $\pi$. It equals $\pi$ if and only if $S$ is Zoll.
\end{mainthm}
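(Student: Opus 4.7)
The plan is to exploit the rotational symmetry of $S$ and the resulting integrability of the geodesic flow via Clairaut's first integral, reducing the inequality to a sharp one-dimensional estimate on the profile function.

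First, I would fix rotationally invariant coordinates $(r,\theta)\in[0,L]\times \R/2\pi\Z$ in which the metric reads $dr^2 + f(r)^2\, d\theta^2$, with $f$ smooth, positive on $(0,L)$, and satisfying the usual boundary conditions at the poles, so that $\mathrm{area}(S) = 2\pi \int_0^L f(r)\, dr$. Since the rotational Killing field generates an $S^1$-action on $T^1 S$ commuting with the geodesic flow, Clairaut's relation $c = f(r)\sin\phi$ (with $\phi$ the angle between the velocity and the meridians) is a first integral and the flow is integrable. Closed geodesics fall into three explicit families: meridians ($c=0$, length $2L$), equators at critical points of $f$ (length $2\pi f(r^\ast)$), and oscillating geodesics at Clairaut constant $c\in(-f_{\max},f_{\max})$, which close precisely when the rotation number
\[
\omega(c) = \frac{1}{\pi} \int_{r_-(c)}^{r_+(c)} \frac{c\, dr}{f(r)\sqrt{f(r)^2 - c^2}}
\]
is rational, in which case the length is $nT(c)$ for $\omega(c) = m/n$ in lowest terms, with $T(c) = 2\int_{r_-(c)}^{r_+(c)} f(r)\,dr/\sqrt{f(r)^2 - c^2}$.

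Second, I would pass to a global surface of section $\Sigma\subset T^1S$ adapted to the Clairaut fibration, for instance a Birkhoff-type annulus based on a meridian or the disk-like section in the spirit of \cite{abhs18}. By integrability, the first-return map on $\Sigma$ is a monotone twist, and both its $d\lambda$-area and return time $\tau$ are explicit functionals of $f$. A Stokes-type identity for the canonical contact form $\lambda$ gives
\[
2\pi\,\mathrm{area}(S) = \int_\Sigma \tau\, d\lambda + (\text{boundary action}),
\]
which, together with $\tau \geq \ell_{\min}$ on the interior of $\Sigma$, reduces the systolic question to bounding $\ell_{\min}^2$ against the right-hand side.

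Third, I would sharpen the estimate to the constant $\pi$ by a weighted averaging argument over the Clairaut parameter $c$. Expressing $\mathrm{area}(S)$ and the length functional as integrals in $c \in [0, f_{\max}]$, the inequality takes the form of a sharp Cauchy--Schwarz-type bound between $T(c)$ and $\omega(c)$, with the extremal case corresponding precisely to the Zoll profiles. Equality then implies that both $T(c)$ and $\omega(c)$ are constant in $c$, so every geodesic closes with the same length and $S$ is Zoll.

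The main obstacle is the sharpness in the third step: extracting exactly the constant $\pi$, rather than a weaker multiple. Choosing the correct surface of section (or weighted average) so that the boundary contribution and the monotone-twist structure combine precisely requires careful use of the boundary behavior of $f$ at the poles (the smoothness conditions $f'(0)=1,\, f'(L)=-1$) and is where the comparison with Zoll metrics is most subtle; a naive application of the Stokes identity would only yield a bound $C\,\mathrm{area}(S)$ with some $C > \pi$.
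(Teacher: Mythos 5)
Your general framework --- Clairaut integrability, a Birkhoff-type section, and a contact-volume identity relating $\int_\Sigma \tau$ to $2\pi\,\mathrm{area}(S)$ --- matches the paper's, but the quantitative core is missing, and the two mechanisms you propose for it do not work. First, the inequality $\tau\ge\ell_{\min}$ on the section is unjustified: at a non-periodic point the return time is the length of an open geodesic arc and bears no a priori relation to the systole; and even if it held, combined with the volume identity it only yields the linear bound $\ell_{\min}\cdot\mathrm{area}(\Sigma)\le 2\pi\,\mathrm{area}(S)$, not a quadratic one. Second, the ``sharp Cauchy--Schwarz bound between $T(c)$ and $\omega(c)$'' is exactly the step you flag as the main obstacle and it is left unresolved; it is not the mechanism that produces the constant $\pi$. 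A further structural point: the section must be the Birkhoff annulus of an equator of \emph{minimal length $L$ among all equators}, not a meridian, because only there does the Clairaut integral depend on a single coordinate $\eta=-\cos\beta$ of the annulus, making the return map the integrable twist $(\xi,\eta)\mapsto(\xi+F'(\eta),\eta)$ with return time $\tau=F(\eta)-\eta F'(\eta)$ for one even generating function $F$; this choice is also what guarantees that all orbits return and that the complement of the flow-saturation has controllable volume.

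What actually closes the argument are three facts about $F$ that your outline does not supply: (i) $F(\eta)>L|\eta|$, proved by projecting the geodesic arc to the $(x,y)$-plane and using its winding number together with Clairaut's bound on the distance from the axis; (ii) $F$ extends continuously to $[-1,1]$ with $F(\pm 1)\ge L$, and the volume of the complement of the saturation of $A$ is expressed over the same set $\Gamma=\{K\ge L/(2\pi)\}$, giving $\mathrm{vol}(T^1S)\ge 4L\int_0^1F\,d\eta-2L^2$; (iii) $S$ is Zoll iff $F$ is constant. One then argues by dichotomy: either $L^2<\tfrac12\mathrm{vol}(T^1S)$ and the minimal equator already beats $\pi$, or $\int_0^1F\,d\eta\le L$, which together with (i)--(iii) forces a non-constant $F$ to attain an interior minimum $\mu<L$ --- a fixed point of the twist map, hence a closed geodesic of length $\mu$ --- and the piecewise-linear minorant $F\ge\max\{\mu,L\eta\}$ integrates to $\mu+\tfrac12(L-\mu)^2/L$, whence $\mathrm{vol}(T^1S)>2\mu^2$. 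This elementary one-variable minimization, together with the boundary analysis of $F$ at $\eta=\pm1$, is precisely what is needed to extract the sharp constant $\pi$ and the rigidity statement; without it your proposal establishes neither.
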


What makes this result interesting and its proof non-trivial is the fact that in the space of spheres of revolution there is a huge subset of Zoll spheres, and hence of spheres for which the above inequality is an equality. A simple way to exhibit these examples is described in Zoll's paper \cite{zol03}, building on previous work by Darboux: One considers an embedded curve $\sigma$ in the $(r,z)$-plane which is symmetric with respect to the $z$-axis and is given by the union of the graph of two even functions
\[
z_-:[-R,R] \rightarrow \R \qquad \mbox{and} \qquad  z_+:[-R,R] \rightarrow \R
\]
such that $z_{\pm}(\pm R)=0$ and $z_-<z_+$ on $(-R,R)$, see Figure~2. If the functions $z_-$ and $z_+$ satisfy the condition
\begin{equation}
\label{darboux}
\sqrt{1+z_-'(r)^2} + \sqrt{1+z_+'(r)^2} = \frac{2R}{\sqrt{R^2-r^2}} \qquad \forall r\in (-R,R),
\end{equation}
then the resulting sphere of revolution $S$ is Zoll. Notice that it is easy to find pairs of functions $z_-,z_+$ satisfying the above equation and producing a smooth embedded curve $\sigma$, and hence a smooth sphere $S$: One can start from an arbitrary even function $z_+:[-R,R] \rightarrow \R$ which is smooth on $(-R,R)$, agrees with the function
\[
r\mapsto \sqrt{R^2 - r^2}
\]
in a neighborhood of $R$ and $-R$ and satisfies the inequality
\[
\frac{2R}{\sqrt{R^2-r^2}} - \sqrt{1+z_+'(r)^2} > 1 \qquad \qquad \forall r\in (-R,R)\setminus \{0\}.
\] 
Then the pointwise equation (\ref{darboux}) uniquely determines a non-negative even function $r\mapsto z_-'(r)^2$ on $(-R,R)$, vanishing only at $r=0$, and by integration we find a unique function $z_-$ such that $z_+$ and $z_-$ satisfy (\ref{darboux}). The resulting curve $\sigma$ is smoothly embedded. See \cite{bes78} for more results on Zoll spheres.

\begin{figure}
\begin{center}
\includegraphics[scale=0.75]{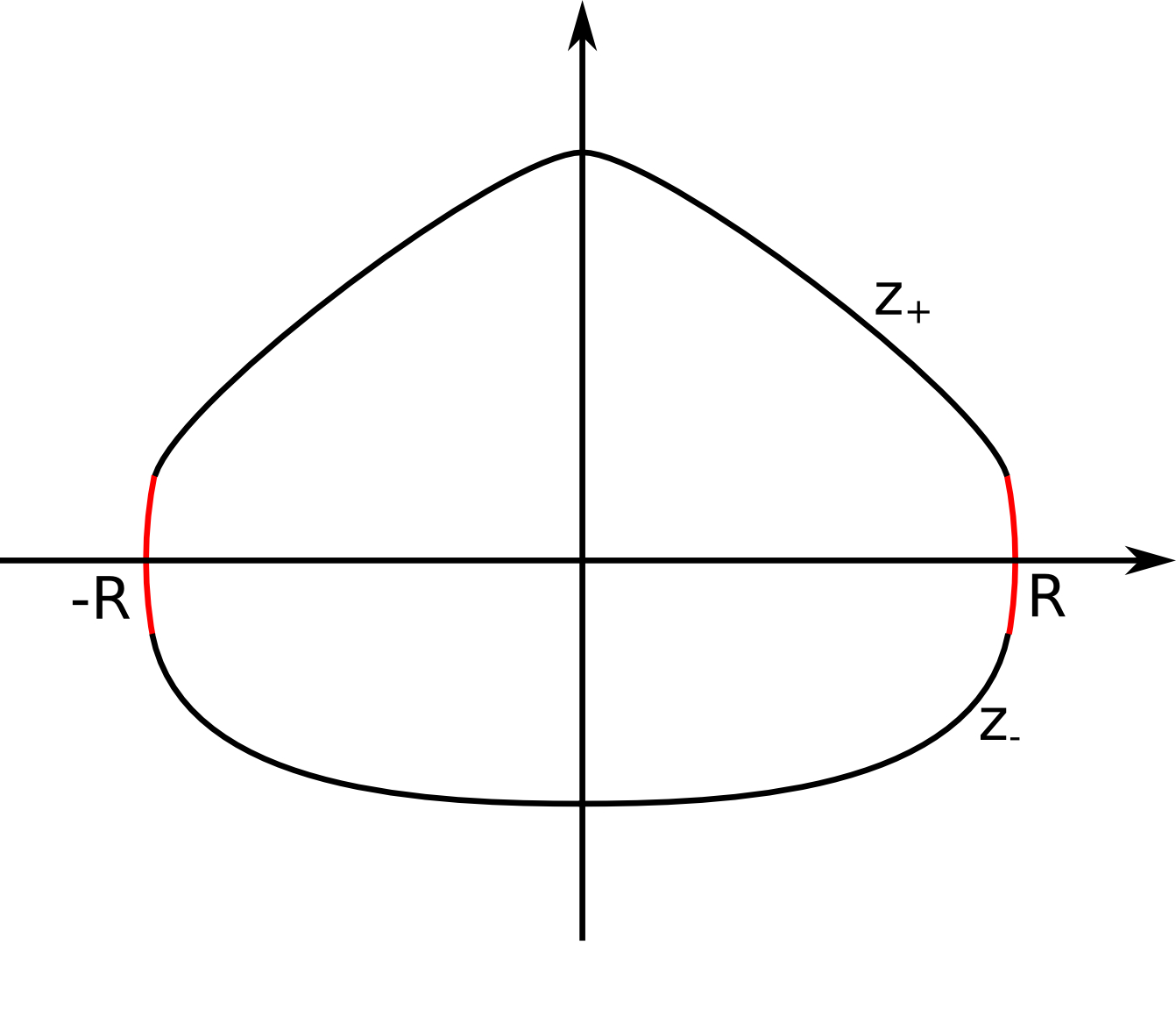}
\caption{Graphs of $z_+$ and $z_-$.}
\end{center}
\end{figure}

In our second theorem, we wish to extend the above systolic inequality to a class of Finsler metrics on $S^2$ arising in Zermelo's navigation problem, see \cite{zer31}. These Finsler metrics are constructed by starting with a surface of revolution $S$ in $\R^3$ and by translating all unit tangent circles
\[
T_p^1 S := \{v\in T_p S \mid \|v\|=1\}
\]
by the Killing vector field
\[
W_a (x,y,z) :=  a \left( x \frac{\partial}{\partial y} - y \frac{\partial}{\partial x} \right) = a \frac{\partial}{\partial \theta},
\]
which is referred to as ``wind''. Here $a$ is a real number whose absolute value is smaller than $1/r_{\max}$, the inverse of the maximal distance of points of $S$ from the $z$-axis, and $\theta$ is the angular coordinate in the plane $x,y$. The condition on $a$ guarantees that the translated circles $W_a(p) + T_p^1 S$ still bound open disks containing the origin of $T_p S$, and hence they are the unit spheres of a Finsler metric $G_a$ on $S$. This Finsler metric is non-reversible when $a\neq 0$. 

There are two possible notions of area which are associated to a Finsler metric $G$ on the surface $S$. The first one is known as Busemann-Hausdorff area and is obtained by integrating over $S$ the following area form $\rho_G$: let $\rho$ be any non-vanishing area form and set
\[
\rho_G(p) := \frac{\pi}{|B_G(p)|_{\rho}}  \rho(p),
\]
where $B_G(p)\subset T_p S$ denotes the unit ball of the Finsler metric $G$ at the point $p\in S$ and $|\cdot|_{\rho}$ denotes the Lebesgue measure on $T_p S^2$ normalized so to give area 1 to the parallelogram spanned by two vectors $v,w\in T_p S$ with $\rho(p)[v,w]=1$. The  Busemann-Hausdorff area of $(S,G)$ is the positive number
\[
\mathrm{area}_{\rm BH}(S,G) := \int_{S} \rho_G,
\]
where $S$ has the orientation induced by $\rho$. The normalization constant in the definition of $\rho_G$ is chosen in such a way that $\mathrm{area}_{BH}(S,G)$ coincides with the Riemannian area of $(S,G)$ when the metric $G$ is Riemannian. The second possibility is to consider the Holmes-Thompson area of $(S,G)$, which is defined as
\[
\mathrm{area}_{\rm HT}(S,G) := \frac{1}{2\pi} \mathrm{vol} \Bigl( \bigcup_{p\in S} B_G^*(p) \Bigr),
\]
where $B_G^*(p)\subset T_p^* S$ denotes the polar set of $B_G(p)$ and $\mathrm{vol}$ denotes the volume on $T^* S$ given by volume form $\omega\wedge \omega$ which is induced by the standard symplectic form $\omega$ of the cotangent bundle $T^* S$. Equivalently, $\mathrm{area}_{HT}(S,G)$ is the integral over $S$ of the area form
\[
\rho_G^*(p) := \frac{|B_G^*(p)|_{\rho}^*}{\pi}  \rho(p),
\]
where $|\cdot|_{\rho}^*$ is the Lebesgue measure on $T_p^* S$ normalized so to give area 1 to the parallelogram spanned by two covectors $\xi,\eta\in T_p^* S$ forming a base which is dual to a base $v,w$ of $T_p S$ with $\rho(p)[v,w]=1$. Again, normalization constants are chosen so that $\mathrm{area}_{HT}(S,G)$ agrees with the Riemannian area when the metric $G$ is Riemannian. See \cite{she01} for more information on these two different ways of measuring area in Finsler geometry.

In systolic questions, the Holmes-Thompson area is probably more relevant, because of its symplectic nature. Still, here we consider both possibilities for defining the systolic ratio of $(S,G)$:
\[
\rho^{\rm BH}_{\rm sys}(S,G) := \frac{\ell_{\min}(S,G)^2}{\mathrm{area}_{\rm BH}(S,G)} \qquad \mbox{and} \qquad \rho^{\rm HT}_{\rm sys}(S,G) := \frac{\ell_{\min}(S,G)^2}{\mathrm{area}_{\rm HT}(S,G)},
\]
where $\ell_{\min}(S,G)$ denotes the length of the shortest non-constant closed geodesic on $(S,G)$. 

In the special case of the metric $G_a$ on the sphere of revolution $S\subset \R^3$ which is described above, we clearly have that the Busemann-Hausdorff area coincides with the Riemannian area of $S$,
\[
\mathrm{area}_{\rm BH}(S,G_a) = \mathrm{area} (S),
\]
because the Finsler unit balls $B_{G_a}(p)$ are just translations of the Riemannian ones. On the other hand, the Holmes-Thompson area of $(S,G_a)$ is surely larger than the above number,
\[
\mathrm{area}_{\rm HT}(S,G_a) > \mathrm{area} (S) \qquad \forall a\in \bigl(-1/r_{\max},1/r_{\max}\bigr) \setminus \{0\}.
\]
This follows from the fact that the area of the polar set of the translation $K+v$ of a centrally symmetric convex planar body $K$ is strictly larger than the area of the polar of $K$ when $v$ is a non-vanishing vector, see e.g. \cite{sch14}. 
%\red{
In our case, $K$ is a disk and this fact can be verified by an elementary argument: The polar set of a unit disk in $\R^2$ centered at the point $(a,0)$ with $|a|<1$ is the ellipse 
\[
\{(p_1,p_2)\in \R^2 \mid (1-a^2)p_1^2 + p_2^2 + 2 a p_1 \leq 1\},
\]
which has area $\pi (1-a^2)^{-3/2}$.
%} 

After these preliminaries, we can state the following result, which generalizes Theorem~\ref{mainthm1}:

\begin{mainthm}
\label{mainthm2}
Let $S\subset \R^3$ be a surface of revolution and let $a$ be a real number whose absolute value is smaller than $1/r_{\max}$, where $r_{\max}$ denotes the maximal distance of a point in $S$ from the $z$-axis. Then 
\[
\rho^{\rm HT}_{\rm sys}(S,G_a) \leq \rho^{\rm BH}_{\rm sys}(S,G_a) \leq \pi.
\]
The first inequality is an equality if and only if $a=0$. The second one is an equality if and only if $a=0$ and $S$ is Zoll.
\end{mainthm}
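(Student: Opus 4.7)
My plan has four stages. \emph{Reduction.} The first inequality $\rho^{\rm HT}_{\rm sys}(S,G_a)\leq\rho^{\rm BH}_{\rm sys}(S,G_a)$ and the characterization of its equality case follow directly from the identities established in the preliminary discussion, namely $\mathrm{area}_{\rm BH}(S,G_a)=\mathrm{area}(S)$ while $\mathrm{area}_{\rm HT}(S,G_a)\geq\mathrm{area}(S)$ with equality iff $a=0$. The theorem thus reduces to the single sharp inequality $\ell_{\min}(S,G_a)^2\leq\pi\cdot\mathrm{area}(S)$, with equality iff $a=0$ and $S$ is Zoll.

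\emph{Zermelo structure.} Since $W_a$ is Killing, the $G_a$-geodesic flow on the unit tangent bundle is smoothly conjugate, via the fibre-translation $(q,v)\mapsto(q,v+W_a(q))$, to the composition of the Riemannian geodesic flow with the lift of the rotation flow $\psi_t$ generated by $W_a$. Consequently, closed $G_a$-geodesics of $G_a$-length $T$ correspond bijectively to Riemannian unit-speed geodesic segments $\sigma:[0,T]\to S$ satisfying the twisted closure conditions $\sigma(T)=\psi_{-T}(\sigma(0))$ and $\dot\sigma(T)=(d\psi_{-T})\dot\sigma(0)$, and under this correspondence the $G_a$-length of the closed geodesic equals the Riemannian length of $\sigma$.

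\emph{Clairaut decomposition.} The rotational symmetry gives the conserved Clairaut integral $C=r\sin\alpha$ (where $r$ is distance from the axis and $\alpha$ is the angle with the meridian), foliating the Riemannian unit tangent bundle into invariant tori labeled by $c\in(0,r_{\max})$ together with two exceptional loci: the meridian family at $c=0$ and the equator at $c=r_{\max}$. On each torus the flow admits the explicit oscillation period and angular twist
\[
T_s(c)=2\int\frac{r\,ds}{\sqrt{r^2-c^2}},\qquad \Delta\theta(c)=2\int\frac{c\,ds}{r\sqrt{r^2-c^2}},
\]
where the integrals are taken over one oscillation of $s$ between the parallels $r=c$. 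The twisted closure condition above then becomes the resonance condition $q\bigl(\Delta\theta(c)+aT_s(c)\bigr)=2\pi p$ for coprime integers $p,q$, each resonant level yielding a closed $G_a$-geodesic of $G_a$-length $qT_s(c)$.

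\emph{Main inequality and equality.} I express the area as $\mathrm{area}(S)=2\pi\int_0^L r(s)\,ds$ and translate this integral into the Clairaut variable $c$ via a coarea-type identity, so that $\pi\cdot\mathrm{area}(S)$ becomes an integral involving $T_s(c)$ and $\Delta\theta(c)$; a Cauchy--Schwarz or convexity estimate then produces a resonant level $c_\ast$ whose closed $G_a$-geodesic has $G_a$-length at most $\sqrt{\pi\,\mathrm{area}(S)}$. The main obstacle is precisely this step: one must reconcile the discrete resonance condition with the continuous area integral, exploiting monotonicity of $\Delta\theta$ in $c$ and an averaging argument that incorporates the boundary cases $c=0$ (meridians, closed as $G_a$-geodesics only when $2aL\in 2\pi\mathbb{Z}$) and $c=r_{\max}$ (equator, with its two Zermelo-drift $G_a$-lengths $2\pi r_{\max}/(1\pm a r_{\max})$). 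The equality case follows by tracking every estimate: equality forces $T_s(c)$ and $\Delta\theta(c)+aT_s(c)$ to be constant in $c$ and $a=0$, which via Abel-type inversion of the integral formulas is equivalent to the profile satisfying the Darboux condition (\ref{darboux}), i.e.~$S$ being Zoll.
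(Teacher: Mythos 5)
Your reduction step is correct and matches the paper, and your Zermelo/Clairaut setup is sound: the wind being Killing does make the $G_a$-flow a reparametrized composition of the Riemannian geodesic flow with the rotation, the return time to a rotationally invariant section is indeed independent of $a$, and closed $G_a$-geodesics do correspond to twisted resonance conditions on Clairaut levels. But the heart of the theorem is the existence of a closed $G_a$-geodesic of length at most $\sqrt{\pi\,\mathrm{area}(S)}$, and this is exactly the step you flag as ``the main obstacle'' without resolving it. As stated, the route you propose does not go through: a Cauchy--Schwarz or convexity estimate on a coarea identity in the Clairaut variable $c$ can only produce information about averages over $c$, whereas for a non-Zoll profile the resonant set $\{c: q(\Delta\theta(c)+aT_s(c))\in 2\pi\Z\}$ on which closed geodesics actually live is typically a measure-zero (or at least thin) set of levels, and nothing in an averaging argument forces a resonant level to sit where the length functional $qT_s(c)$ is small. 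One needs a mechanism that \emph{locates} a closed orbit, not one that estimates an average.

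The paper supplies precisely this mechanism, and it is worth seeing why it is different from what you propose. The first return map to the Birkhoff annulus of the equator of minimal radius is a twist map $\varphi_a(\xi,\eta)=(\xi+F_a'(\eta),\eta)$, so closed geodesics come from \emph{critical points} of the generating function $F_a$ (your $q=1$ resonances), and their existence is forced by elementary calculus rather than by number-theoretic resonance: $F$ extends continuously to $[-1,1]$ with $F(\pm 1)\geq L$ (Lemma \ref{lemma-area}), satisfies $F(\eta)\geq L|\eta|$ (Lemma \ref{stima}), and its integral is bounded above by the contact volume via the formula of Lemma \ref{totvol}. If the drifted equator is not already short enough, the integral bound forces $F$ to dip below $L/(1+ar_{\min})$ at an interior minimum, and a comparison between $F$ and $F_a=F+ar_{\min}T$ (Lemma \ref{prop1}) then produces a critical point of $F_a$ with return time strictly below that minimum; the inequality $\int_0^1 F>\mu+\tfrac12(L-\mu)^2/L$ closes the argument. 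Note also that in the Zoll case with $a\neq 0$ the integral hypothesis of Lemma \ref{prop1} actually \emph{fails}, and strictness there comes solely from the drifted equator having length $L/(1+ar_{\min})<L$; your sketch, which tries to treat the equator as one more boundary case of the averaging, does not isolate this dichotomy. To make your approach work you would essentially have to reconstruct the generating-function argument: identify $F_a'$ with $\Delta\theta+aT_s$ up to normalization, establish the boundary values and the lower bound, and replace ``averaging over resonances'' with ``interior critical point of $F_a$.'' As it stands, the proposal has a genuine gap at its central step.
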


%\marginpar{\blue{One sentence here I deleted, since is repeated on middle of page 7}}
The proof of Theorem \ref{mainthm2} is based on similar ideas 
%\blue{
to that of Theorem \ref{mainthm1}
%} 
and requires some further computations which are contained in the last three sections of the paper.

\begin{figure}
\begin{center}
\includegraphics[scale=0.75]{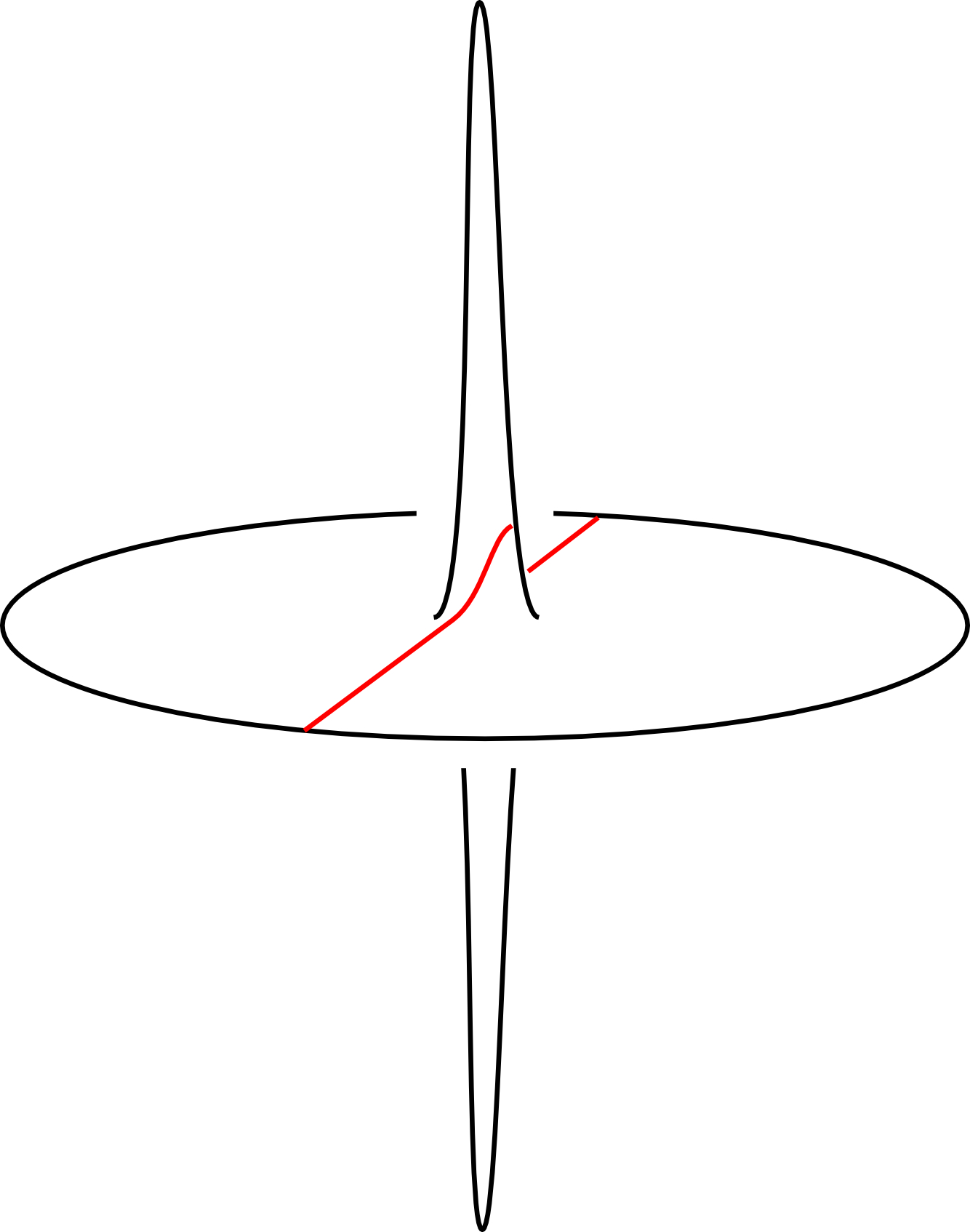}
\caption{A flat-spiked two sphere of revolution: neither a meridian nor an equator has length equal to $\ell_{\rm min}$.}
\end{center}
\end{figure}

Recall that on a sphere of revolution $S$ all meridians (intersections of $S$ with planes containing the $z$-axis) are closed geodesics, and so are the equators (horizontal circles whose distance from the $z$-axis is stationary). Proving an upper bound for the systolic ratio requires showing the existence of a closed geodesic with a certain upper bound on the length. Notice that we cannot hope to get the sharp bound $\rho_{\mathrm{sys}}(S)\leq \pi$ for a sphere of revolution $S$ just by looking at meridians and equators. Indeed, consider a sphere of revolution looking like a flat disk of radius $R$ with a long spike carrying very small area around the axis of revolution as in Figure~3. Its area is close to $2 \pi R^2$. The unique equator has length $2\pi R$, and the quotient between the square of its length and the area is close to
\[
\frac{(2\pi R)^2}{2 \pi R^2} = 2 \pi.
\]
So this closed geodesic misses the bound we wish to prove by a factor of 2.  The meridians of such a sphere can be arbitrarily long, so they also fail to give us the desired upper bound $\pi$ on the systolic ratio. In this example, closed geodesics of minimal length are neither equators nor meridians. 

The bound $\rho_{\mathrm{sys}}(S)< 2\pi$ for any sphere of revolution $S$ is easily proven by considering the equator of maximal length and by bounding from below the area of $S$ by twice the area of the disk bounded by this equator. Getting the sharp bound $\pi$ requires a more sophisticated argument,
%\blue{, 
which, although ultimately based on the minimization of a function of one variable,
%} 
is based on ideas from three-dimensional Reeb dynamics. 

\medskip

We conclude this introduction by sketching 
%\blue{
the proof of Theorem \ref{mainthm1}.
%}.
Consider an equator of $S$ having minimal length among all equators, denote by $L$ its length and fix one of its two orientations. Denote by $A$ the open Birkhoff annulus determined by this equator: $A\subset T^1 S$ is the set of unit tangent vectors which are based at the equator and form an angle $\beta\in (0,\pi)$ with the positively oriented tangent vectors to the equator. The set $A$ is a smooth open surface in the three-dimensional manifold $T^1 S$ and is transverse to the geodesic flow. The fact that the length of the equator is minimal guarantees that the forward and backward orbits of any $u\in A$ by the geodesic flow intersect $A$ again. This fact will be deduced by the conservation of Clairaut's integral on $T^1 S$, which takes the form
\[
K(u) := r(u) \cos \beta(u),
\]
where $r(u)$ denotes the distance of the based point of $u$ from the $z$-axis and $\beta(u)$ the angle which $u$ makes with the positive direction of the parallel through its base point. Therefore, the geodesic flow $\phi^t$ on $T^1 S$ has a first return time to $A$
\[
\tau: A \rightarrow (0,+\infty), \qquad \tau(u) := \min \{t\in (0,+\infty) \mid \phi^t(u)\in A\},
\]
and a first return map
\[
\varphi: A \rightarrow A, \qquad \varphi(u) := \phi^{\tau(u)}(u).
\]
The function $\tau$ is smooth on $A$, and the map $\varphi$ is a smooth diffeomorphism which preserves the area form 
\[
\omega := \sin \beta \, d\xi\wedge d\beta = d\xi \wedge d\eta, 
\]
where $\xi\in \R/L \Z$ denotes the arc-length parameter on the equator and $\eta:=-\cos \beta\in (-1,1)$. Using the rotational symmetry and the conservation of Clairaut's integral, we shall prove that in the coordinates $(\xi,\eta)$ the first return time and first return map take the simple form
\begin{equation}
\label{tauphi}
\tau(\xi,\eta) = F(\eta) - \eta F'(\eta), 
\qquad \varphi(\xi,\eta) = (\xi+F'(\eta),\eta),
\end{equation}
where $F:(-1,1)\rightarrow \R$ is an even smooth function whose value at 0 is the length of the meridians. We will refer to $F$ as the generating function of $\varphi$. Any critical point $\eta_0$ of the function $F$ produces a family $(\xi,\eta_0)$, $\xi\in \R/L \Z$, of fixed points of the map $\varphi$, and hence of closed geodesics of length $\tau(\xi,\eta_0)=F(\eta_0)$. Geometric considerations force $F$ to satisfy the lower bound
\begin{equation}
\label{vineq}
F(\eta)\geq L |\eta| \qquad \forall \eta\in (-1,1).
\end{equation}
The generating function $F$ encloses all the relevant information about the dynamics of the geodesic flow on the invariant region of $T^1 S$ which is spanned by the evolution of $A$, and also some information on the complement of this region, as we are going to show.

The main part of the proof consists in understanding the boundary behaviour of the generating function $F$ at $\eta=\pm 1$ and in computing the area of $S$ in terms of  $F$. As is well known, the geodesic flow is the Reeb flow of a contact form $\alpha$ on $T^1 S$, which is called the Hilbert form, and the area of $S$ coincides with the contact volume of $T^1 S$ divided by $2\pi$. Therefore, we may as well compute the contact volume of $T^1 S$. The evolution of the Birkhoff annulus $A$ by the geodesic flow spans an open invariant subset $\tilde{A}\subset T^1 S$ whose contact volume is easily seen to be:
\[
\mathrm{vol}(\tilde{A}) = \int_{A} \tau \, \omega = L \int_{-1}^1 \bigl( F(\eta) - \eta F'(\eta) \bigr) \, d\eta.
\]
In order to simplify this formula, we would like to perform an integration by parts, and this requires showing that the generating function $F:(-1,1)\rightarrow \R$ extends continuously to the closed interval $[-1,1]$ and computing $F(-1)=F(1)$. Moreover, we would like to have a formula for the contact volume of the complement of $\tilde{A}$. It turns out that these two problems are closely related. In order to explain this relation, denote by $M$ the length of the meridians, consider the arclength parameter $s\in (0,M/2)$ on the portion of a meridian joining the two poles, and denote by $r(s)$ the distance of the corresponding point from the $z$-axis.  With this notation, the Clairaut integral takes the form
\[
K(\beta,s) = r(s) \cos \beta.
\]
We shall prove that $F$ extends continuously to $[-1,1]$ by setting
\begin{equation}
\label{bordo}
F(-1)=F(1) := L + \int_{\Gamma} \cos \beta\, d\beta\wedge ds,
\end{equation}
where $\Gamma$ is the following compact subset of the open rectangle $Q:= (-\pi/2, \pi/2) \times (0,M/2)$:
\[
\Gamma := \{ (\beta,s) \in Q \mid 2\pi K(\beta,s) \geq L \}.
\]
The geometric interpretation of the set $\Gamma$ is that this set, together with the corresponding subset of the rectangle  $Q':= (\pi/2,3\pi/2) \times (0,M/2)$, gives us exactly those unit tangent vectors which are not reached by the evolution of the unit tangent vectors in the Birkhoff annulus $A$. Because of this, it should not be surprising that the set $\Gamma$ appears also in the formula for the contact volume of the complement of $\tilde{A}$, which indeed reads
\[
\mathrm{vol}(T^1 S \setminus \tilde{A}) = 4 \pi \int_{\Gamma} r(s) d\beta \wedge ds.
\]
Putting the above formulas together, we obtain the following identity for the contact volume of $T^1 S$:
\[
\mathrm{vol}(T^1S) = 4 L \int_0^1 F(\eta)\, d\eta - 2 L^2 + \int_{\Gamma} (4\pi r(s) - 2 L \cos \beta)\, d\beta\wedge ds.
\]
From this identity it will be easy to derive the lower bound 
\begin{equation}
\label{stvol}
\mathrm{vol}(T^1 S) \geq 4 L \int_0^1 F(\eta)\, d\eta- 2 L^2.
\end{equation}
Moreover, formula (\ref{bordo}) will allow us to show that $S$ is Zoll if and only if the generating function $F$ is constant. 

Since Zoll spheres have systolic ratio $\pi$, in order to prove Theorem \ref{mainthm1} we need to show that if $S$ is not Zoll (and hence, by what we have said above, $F$ is not
%\marginpar{\blue{inserted parentheses, to help the reader (hopefully).}}
constant) then the systolic ratio of $S$ is less than $\pi$. We may assume that $L^2$ is at least equal to $\pi$ times the area of $S$, that is, to half of the contact volume of $T^1 S$, because otherwise the minimal equator is short enough to give us the required upper bound on the systolic ratio. Together with (\ref{stvol}), we deduce that
\[
\int_0^1 F(\eta)\, d\eta \leq L.
\]
The fact that $F$ is not constant and the lower bound $F(1)\geq L$ imply that the even function $F$ achieves its minimum at an interior point $\eta_0\in (-1,1)$ and that $\mu:=F(\eta_0)<L$. This is a critical point of $F$ which gives us a closed geodesic $\gamma$ of length $\mu$. Using also the inequality (\ref{vineq}) we easily obtain the lower bound
\[
\int_0^1 F(\eta)\, d\eta > \mu + \frac{1}{2} \frac{(L-\mu)^2}{L},
\]
which together with (\ref{stvol}) gives us
\[
2\pi \, \mathrm{area}(S) = \mathrm{vol}(T^1 S) > 2\mu^2 = 2 \, \ell(\gamma)^2.
\]
The existence of a closed geodesic whose square of the length is less than $\pi \, \mathrm{area}(S)$  shows that $\rho_{\mathrm{sys}}(S)<\pi$, as we wished to prove. This concludes the sketch of the proof of Theorem~\ref{mainthm1}.

In the Finsler setting of Theorem \ref{mainthm2}, the same approach works but the above formulas require some modification. Indeed, the first return time $\tau$ to the Birkhoff annulus determined by an equator of minimal radius is still given by the first identity in (\ref{tauphi}), but the first return map is affected by the wind $W_a$ and is given by
\[
\varphi(\xi,\eta) = (\xi + F_a'(\eta),\eta),
\]
where $F_a$ is the function
\[
F_a(\eta) := \left( 1- \frac{aL}{2\pi} \eta \right) F(\eta) + \frac{aL}{\pi} \int_0^{\eta} F(\zeta)\, d\zeta.
\]
In particular, $F_a$ is not even when $a\neq 0$. The proof of Theorem \ref{mainthm2} consists in finding a critical point of $F_a$ with small value of $\tau$, under the assumption that the Finsler length of the equator of minimal radius parametrized in the direction of the wind is not small enough to guarantee that the Busemann-Hausdorff systolic ratio of $(S,G_a)$ is smaller than~$\pi$. 

\paragraph{\bf Acknowledgments.} We would like to thank Juan Carlos \'Alvarez Paiva for suggesting to us the problem of maximizing the systolic ratio on spheres of revolution. We are also grateful to Gerhard Knieper for precious bibliographical suggestions and to Erasmo Caponio for a discussion which lead us to the current formulation of Theorem \ref{mainthm2}. The research of A.\ Abbondandolo and B.\ Bramham is supported by the SFB/TRR 191 ``Symplectic Structures in Geometry, Algebra and Dynamics'', funded by the Deutsche Forschungsgemeinschaft. U.\ Hryniewicz was supported by CNPq grant 309966/2016-7 and by the Humboldt Foundation, and acknowledges the generous hospitality of the Ruhr-Universit\"at Bochum. P.\ A.\ S.\ Salom\~{a}o is supported by the FAPESP grant 2011/16265-8 and the CNPq grant 306106/2016-7.

\section{The geodesic flow on a sphere of revolution}

\label{sphererev}
Let $S$ be a  {\em sphere of revolution}, that is, a smooth surface in $\R^3$ which is diffeomorphic to a sphere and is invariant with respect to the rotations around the $z$-axis. Such a surface is uniquely determined by its intersection with the $(x,z)$-plane, which is necessarily a smooth embedded closed curve, symmetric with respect to the $z$-axis. The symmetry and the embedding condition imply that this curve meets the $z$-axis orthogonally at exactly two points, which we call {\em poles}. The pole with smaller value of $z$ is called {\em south pole} and the other one {\em north pole}. We denote by $M$ the length of this closed curve and we parametrize it by arc length by the map 
\[
\sigma: \R/M \Z \rightarrow \R^2 
\]
in such a way that the first component of $\sigma(s)$ is positive for $s\in (0,M/2)$, $\sigma(0)$ corresponds to the south pole and $\sigma(M/2)$ to the north pole. If we denote by 
\[
\sigma(s)=(r(s),z(s)), \qquad s\in \R/M \Z,
\] 
the components of the curve $\sigma$, we obtain that the sphere of revolution $S$ is the set
\[
S =\bigl\{ (r(s)\cos \theta,r(s)\sin \theta,z(s))\in \R^3 \mid  \theta\in \R/2\pi \Z, s\in [0,M/2]\bigr\},
\]
and that the south and north poles are the points $p_S=(0,0,z(0))$ and $p_N=(0,0,z(M/2))$. Notice that the restriction of the function $r$ to the interval $[0,M/2]$ satisfies:
\[
r(0)=r(M/2)=0, \quad r'(0)=1, \quad r'(M/2)=-1, \quad r(s)>0 \;\; \forall s\in (0,M/2).
\]
We consider $S$ as a Riemannian manifold with the metric induced by the Euclidean metric of $\R^3$. The pullback of this metric with respect to the diffeomorphism
\[
\Phi: \R/2\pi \Z \times (0,M/2) \rightarrow S\setminus \{p_S,p_N\}, \qquad \Phi(\theta,s) = \bigl( r(s) \cos \theta, r(s)\sin \theta, z(s) \bigr),
\]
has the form
\begin{equation}
\label{metric}
r(s)^2 d\theta^2 + ds^2.
\end{equation}
We recall some well known facts about geodesics on $S$. We shall always parametrize  geodesics by arc length.
All {\em meridians}, that is unit speed curves parametrizing the intersection of $S$ with a plane containing the $z$-axis, are closed geodesics of length $M$. The {\em parallels} are the arc length reparametrizations of the circles
\[
P_s:= \bigl\{ (r(s)\cos \theta,r(s)\sin \theta,z(s)) \mid \theta\in \R/2\pi \Z \bigr\},
\]
where $s\in (0,M/2)$. We orient the parallels counterclockwise, that is, by declaring the tangent vectors $(-r(s)\sin \theta,r(s) \cos \theta,0)$ to be positive.
The curve $P_s$ is a (necessarily closed) geodesic if and only if $s$ is a critical point of the function $r$. These closed geodesics are called {\em equators}. For $s=0$ and $s=M/2$ the parallels degenerate to the south and north poles:
\[
P_0 := \{p_S\}, \qquad P_{M/2} := \{p_N\}.
\]
We denote by $T^1 S$ the unit tangent bundle of $S$ and by $\phi^t : T^1 S \rightarrow T^1 S$ the geodesic flow. The flow $\phi^t$ is the Reeb flow of the {\em Hilbert contact form} $\alpha$ on $T^1 S$, that is, the contact form which is obtained by restricting the canonical Liouville form of the cotangent bundle of $S$ to the unit cotangent bundle and then pulling it back to $T^1 S$ by the bundle isomorphism which is induced by the metric. 

If $u$ is a unit tangent vector to $S$ at a point $p = \Phi(\theta,s) \in S$ different from the two poles, we denote by $\beta=\beta(u)$ the angle which $u$ makes with the positive direction of the parallel $P_s$ passing through $p$. By taking (\ref{metric}) into account, we see that the unit tangent bundle of $S$ minus the two circles given by the unit tangent vectors at the two poles is the image of the diffeomorphism
\begin{equation}
\label{Psi}
\begin{split}
\Psi: \R/2\pi \Z  \times \R/2\pi \Z \times (0,M/2) \rightarrow  T^1 S \setminus ( T^1_{p_S} S \cup T^1_{p_N} S ), \\
\Psi(\theta,\beta,s) = \left( \Phi(\theta,s), \frac{1}{r} \cos \beta \frac{\partial \Phi}{\partial \theta} (\theta,s) + \sin \beta \frac{\partial \Phi}{\partial s} (\theta,s) \right).
\end{split}
\end{equation}
We will regularly use the above diffeomorphism as a coordinate system to represent unit tangent vectors not based at the two poles.
In the above coordinate system, the Hilbert contact form $\alpha$ is easily seen to be
\begin{equation}
\label{contact}
\alpha(\theta,\beta,s) = r(s) \cos \beta \, d\theta + \sin \beta \, ds.
\end{equation}
Therefore, the contact volume form of $T^1 S$ in this coordinate system is
\begin{equation}
\label{volume-form}
\alpha\wedge d\alpha (\theta,\beta,s) = r(s) \, d\theta\wedge d\beta \wedge ds.
\end{equation}
We recall that the contact volume of $T^1 S$ coincides with $2\pi$ times the Riemannian area of $S$. In the case of our sphere of revolution, this general fact produces the identity
\begin{equation}
\label{vol-area}
\mathrm{area}(S) = \frac{1}{2\pi} \mathrm{vol}(T^1 S) = \frac{1}{2\pi} \int_{\R/2\pi \Z\times \R/2\pi \Z \times (0,M/2)} \alpha\wedge d\alpha = 2\pi \int_0^{M/2} r(s)\, ds.
\end{equation}
The Reeb vector field of the contact form $\alpha$ has the expression
\[
R(\theta,s,\beta) = \frac{\cos \beta}{r(s)} \frac{\partial}{\partial \theta}  + \frac{r'(s) \cos \beta}{r(s)} \frac{\partial}{\partial \beta} + \sin \beta \frac{\partial}{\partial s}.
\]
Therefore, the geodesic equation takes the form of the following system 
\begin{eqnarray}
\label{geoeq1}
\dot\theta &=& \frac{\cos \beta}{r(s)}  \\ 
\label{geoeq3}
\dot\beta &=& \frac{r'(s) \cos \beta}{r(s)}
\\
\label{geoeq2}
\dot{s} &=& \sin \beta.
\end{eqnarray}
From the last two equations one immediately recovers the familiar fact that the {\em Clairaut function}
\[
K: T^1 S \rightarrow \R, \qquad K(u) = \left\{ \begin{array}{ll} K(\theta,\beta,s) := r(s) \cos \beta & \mbox{if } u\notin T_{p_s} S \cup T_{p_N} S, \\ 0 & \mbox{if } u\in T_{p_s} S \cup T_{p_N} S, \end{array} \right.
\]
is a first integral of the geodesic flow. The invariance of the Clairaut function implies the following well known facts about the asymptotic behaviour of geodesics other than meridians:

\begin{lem}
\label{gengeo}
Let $\gamma: \R \rightarrow S$ be a geodesic which is not a meridian. Then exactly one of the following two alternative conditions hold:
\begin{enumerate}[(i)]
\item for $t\rightarrow -\infty$ and $t\rightarrow +\infty$ the geodesic $\gamma$ is asymptotic to two possibly coinciding equators $P_{s_-}$ and $P_{s_+}$ with $r(s_-) = r(s_+)= |K(\dot\gamma)|$;
\item there exist numbers $0<s_1<s_2<M/2$ such that 
\[
r(s_1)=r(s_2)=|K(\dot\gamma)| < r(s) \quad \forall s\in (s_1,s_2), \quad r'(s_1)>0, \; r'(s_2)<0,
\]
$\gamma$ is confined to the strip
\[
\bigcup_{s\in [s_1,s_2]} P_s,
\]
and it alternately touches both parallels $P_{s_1}$ and $P_{s_2}$ tangentially infinitely many times.
\end{enumerate}
\end{lem}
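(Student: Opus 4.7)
The plan is to use the Clairaut first integral to reduce the geodesic system to a scalar conservative ODE for $s(t)$, and then to classify trajectories by the order of vanishing of the resulting ``potential'' at its zeros.

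I would first observe that a non-meridian geodesic $\gamma$ avoids both poles: at a pole $p$ the rotational symmetry and uniqueness of geodesics force every geodesic through $p$ to lie in a vertical plane and hence be a meridian. Therefore $s(t)\in (0,M/2)$ for all $t$ and the coordinates $(\theta,\beta,s)$ are defined along $\gamma$. Moreover $K(\dot\gamma)\neq 0$: indeed $K$ is constant along $\gamma$ by the discussion following (\ref{geoeq2}), and if $K$ vanished then (\ref{geoeq1}) would give $\dot\theta\equiv 0$, contradicting the hypothesis that $\gamma$ is not a meridian. Setting $c:=|K(\dot\gamma)|>0$, the identity $r(s)\cos\beta=\pm c$ combined with (\ref{geoeq2}) produces the autonomous first-order equation
\[
\dot s(t)^2 = 1-\cos^2\beta(t) = 1-\frac{c^2}{r(s(t))^2} =: V(s(t)),
\]
with $V\geq 0$ on the trajectory.

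Because $V\geq 0$ exactly where $r(s)\geq c$, by continuity $s(t)$ is confined to a single connected component $[s_1,s_2]\subset (0,M/2)$ of $\{r\geq c\}$, so $r(s_1)=r(s_2)=c$ (the degenerate case $s_1=s_2$ corresponds to $\gamma$ being the equator $P_{s_1}$). A direct computation gives $V'(s_i)=2c^2 r'(s_i)/r(s_i)^3$, so $V'(s_i)=0$ if and only if $P_{s_i}$ is an equator. The classification then reduces to the standard analysis of $\dot s^2=V(s)$ near a zero of $V$: if $V'(s_i)\neq 0$, then $V$ vanishes to order one at $s_i$ and $\int^{s_i}ds/\sqrt{V(s)}$ converges, so the trajectory reaches $s_i$ in finite time with $\dot s=0$ and reflects back; if $V'(s_i)=0$, then $V$ vanishes to order at least two and the integral diverges, so the trajectory approaches $s_i$ only in infinite time.

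Assembling the cases gives the dichotomy. If neither endpoint is critical, so $r'(s_1)>0$ and $r'(s_2)<0$, then $s(t)$ oscillates periodically between $s_1$ and $s_2$, touching each endpoint with $\dot s=0$, i.e.\ with $\beta\in\{0,\pi\}$, which is exactly the tangential contact with the parallel required in case (ii); alternation and infinite recurrence are immediate from periodicity. If at least one endpoint is critical, then the trajectory asymptotes to the corresponding equator in at least one time direction, and the limits $\cos\beta(t)\to\pm 1$ together with $\dot\theta(t)\to\pm 1/c$ lift the asymptotic behaviour of $s$ to asymptotic behaviour of $\gamma$ in $T^1S$ toward $P_{s_i}$; this is case (i), with $s_-=s_+$ in the mixed situation where the trajectory bounces off a regular endpoint and asymptotes to the same critical endpoint in both time directions, and with $s_-\neq s_+$ when both endpoints are critical. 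Mutual exclusivity of (i) and (ii) is automatic since case (i) requires $r'(s_-)=0$ or $r'(s_+)=0$ while case (ii) requires $r'(s_1)\neq 0$ and $r'(s_2)\neq 0$.

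The main bookkeeping subtlety I expect is verifying that $s(t)$ really stays in one connected component of $\{r\geq c\}$ and that an interior zero of $V$ inside $[s_1,s_2]$, being necessarily a local minimum of $r$ and hence a critical point, cannot be reached in finite time — so the qualitative picture above is always correctly described in terms of the actual endpoints of the range of $s(t)$.
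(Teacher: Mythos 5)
Your argument is correct, and it takes a genuinely different (more classical-mechanics flavoured) route than the paper. You reduce everything to the quadrature of the effective-potential equation $\dot s^2=V(s)$ with $V(s)=1-c^2/r(s)^2$ and classify turning values by the order of vanishing of $V$, using convergence or divergence of $\int ds/\sqrt{V}$ to decide between finite-time reflection and infinite-time asymptotic approach; since $V'(s_i)=2c^2r'(s_i)/r(s_i)^3$, the dichotomy ``simple zero versus degenerate zero of $V$'' is exactly ``$P_{s_i}$ is not an equator versus is an equator'', which matches the statement. The paper instead works directly with the system (\ref{geoeq1})--(\ref{geoeq2}): its Claim~1 shows by a soft monotone-convergence argument that a tail on which $\dot s$ never vanishes must limit onto an equator with $r=|K|$, and its Claim~2 shows that any zero of $\dot s$ occurs at a non-critical point of $r$, is isolated, and is a strict local extremum; the dichotomy is then obtained by counting zeros of $\dot s$ and iterating the bounce argument. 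Your version buys a more unified case analysis and quantitative byproducts (the period $2\int_{s_1}^{s_2}ds/\sqrt{V}$ of the $s$-oscillation, the infinite approach time), at the cost of handling improper integrals; the paper's version needs no integral estimates at all. The one point you should make explicit is that the first-order relation $\dot s^2=V(s)$ by itself does not force reflection at a simple zero (it admits the spurious constant solution there), so ``reaches $s_i$ and reflects back'' must be justified from the genuine second-order equation $\ddot s=\tfrac12 V'(s)$, i.e.\ from equations (\ref{geoeq3})--(\ref{geoeq2}) and $\dot\beta\neq 0$ at the turning point --- this is precisely the content of the paper's Claim~2. Your closing remark about interior degenerate zeros of $V$ splitting the component of $\{r\geq c\}$ into the actual range of $s(t)$ is the right bookkeeping and does need to be carried out, but it causes no difficulty.
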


\begin{proof}
Since $\gamma$ is not a meridian, the Clairaut integral $K(\dot{\gamma})$ does not vanish; without loss of generality we may assume that $K(\dot{\gamma})>0$. 

The fact that $\gamma$ does not run through the poles - meridians are the only geodesics doing this - allows us to express its derivative in terms of the coordinate system $\Psi$ introduced above as
\[
\dot\gamma(t) = \bigl(\theta(t),\beta(t),s(t)\bigr),  \qquad \forall t\in \R,
\]
for suitable smooth functions $\theta: \R \rightarrow \R/2\pi \Z$,  $\beta: \R\rightarrow \R/2\pi \Z$ and $s: \R \rightarrow (0,M/2)$ satisfying the equations (\ref{geoeq1}), (\ref{geoeq3}) and (\ref{geoeq2}).

\medskip

\noindent{\em Claim 1. If there is a $t_0\in \R$ such that $\dot{s}(t)\neq 0$ for all $t>t_0$, then for $t\rightarrow +\infty$ the geodesic $\gamma$ is asymptotic to an equator $P_{s_+}$ with $r(s_+)=K(\dot\gamma)$. An analogous result holds when $\dot{s}(t)\neq 0$ for all $t<t_0$.}

\medskip

Indeed, assume without loss of generality that $\dot{s}(t)>0$ for all $t>t_0$. Then $s$ is strictly increasing on the interval $[t_0,+\infty)$ and must converge to some $s_+\in (0,M/2]$ for $t\rightarrow +\infty$. The conservation of Clairaut's integral
\[
r(s(t)) \cos \beta(t) = K(\dot{\gamma}) >0
\]
ensures that $r\circ s$ is bounded away from 0 and hence $s_+<M/2$. The above identity also implies that $\beta(t)$ converges to some $\beta_+$ for $t\rightarrow +\infty$ with
\[
r(s_+) \cos \beta_+ = K(\dot{\gamma}).
\]
By equation (\ref{geoeq2}), $\dot{s}(t)$ converges to $\sin \beta_+$ for $t\rightarrow +\infty$, so the fact that $s(t)$ is increasing and converges for $t\rightarrow +\infty$ forces $\beta_+$ to be zero in $\R/2\pi \Z$. From (\ref{geoeq3}) we deduce that $\dot\beta(t)$ converges to $r'(s_+)/r(s_+)$ for $t\rightarrow +\infty$, and the fact that $\beta(t)$ converges for $t\rightarrow +\infty$ implies that $r'(s_+)=0$. We conclude that
\[
\dot{\theta}(t) \rightarrow \frac{1}{r(s_+)}, \quad \beta(t)\rightarrow 0 \mod 2\pi , \quad s(t) \rightarrow s_+ \qquad \mbox{for } t\rightarrow +\infty,
\]
and hence $\gamma$ is asymptotic to the equator $P_{s_+}$ for $t\rightarrow +\infty$. This concludes the proof of Claim 1.

\medskip

In what follows, we assume that $\gamma$ is not an equator, since this case is covered by condition (i). 

\medskip

\noindent{\em Claim 2. Assume that $t_0\in \R$ is such that $\dot{s}(t_0)=0$. Then $r'(s(t_0))\neq 0$, $t_0$ is an isolated zero of $\dot{s}$, and $s$ has a strict local maximum or a strict local minimum at $t_0$. }

\medskip

Assume that $t_0\in \R$ is such that $\dot{s}(t_0)=0$. By equation (\ref{geoeq2}), we have $\beta(t_0)\in \pi \Z$. If $r'(s(t_0))=0$, then uniqueness implies that $\gamma$ parametrizes the equator $P_{s(t_0)}$, and we are excluding this case. Therefore, $r'(s(t_0))\neq 0$. Then (\ref{geoeq3}) implies that $\beta$ is strictly monotone in a neighborhood of $t_0$. Therefore, (\ref{geoeq2}) implies that $\dot{s}(t)$ changes sign when $t$ crosses $t_0$, and hence $t_0$ is an isolated zero of $\dot{s}$ and $s$ has a strict local maximum or a strict local minimum at $t_0$. This concludes the proof of Claim 2.

\medskip

We now argue on the number of zeros of $\dot{s}$. If $\dot{s}$ has no zeros or just one zero, then by Claim 1 we get that alternative (i) holds. There remains to consider the case in which $\dot{s}$ has at least two zeros. Let $t_1<t_2$ be two consecutive zeros of $\dot{s}$ and assume without loss of generality that $\dot{s}>0$ on $(t_1,t_2)$. Then $s$ increases strictly monotonically from $s_1:=s(t_1)$ to $s_2:=s(t_2)$ on $[t_1,t_2]$. The conservation of Clairaut's integral implies that
\[
K(\dot\gamma) = r(s_1) = r(s_2) = r(s(t)) \cos \beta(t) \qquad \forall t\in \R.
\]
The fact that $\dot{s}>0$ on $(t_1,t_2)$ and equation (\ref{geoeq2}) imply that $\cos\beta<1$ on $(t_1,t_2)$ and hence the above identity shows that
\begin{equation}
\label{campana}
r(s)>r(s_1)=r(s_2) \qquad \forall s\in (s_1,s_2).
\end{equation}
By Claim 2, $s_1$ and $s_2$ are not critical points of $r$ and since they minimize $r$ on the interval $[s_1,s_2]$ we must have
\begin{equation}
\label{diversi}
r'(s_1)>0 \qquad \mbox{and} \qquad r'(s_2)<0.
\end{equation}
Again by Claim 2, $s$ achieves a strict local maximum at $t_2$ and starts decreasing again after $t_2$. As long as $s(t)$ stays above $s_1$, $r(s(t))$ remains strictly larger that $r(s_1)$ by (\ref{campana}) and hence the conservation of Clairaut's integral and (\ref{geoeq2}) imply that $\dot{s}(t)$ remains negative. The function $s$ cannot remain strictly above $s_1$ for all $t>t_2$, because in this case by Claim~1 it would converge to a critical value $s\in [s_1,s_2]$ of $r$ with $r(s)=r(s_1)$ (again by the conservation of Clairaut's integral), and there are no such points in $[s_1,s_2]$, by (\ref{campana}) and (\ref{diversi}). We conclude that $s$ must reach the value $s_1$ at some first instant $t_3>t_2$. By Claim~2, $s$ achieves a strict local minimum at $t_3$. By iterating this argument and by an analogous analysis for $t<t_1$, we see that $s$ is forced to oscillate infinitely many times between the values $s_1$ and $s_2$, and that the set of zeros of $\dot{s}$ is unbounded from above and from below and consists of global minimizers  and maximizers  of $s$, with values $s_1$ and $s_2$ respectively. At these instants, $\beta$ belongs to $\pi \Z$ and hence the geodesic is tangent to the parallels $P_{s_1}$ and $P_{s_2}$. This shows that alternative (ii) holds.
\end{proof}

\section{Birkhoff annuli at equators of minimal length}
\label{biran}

Let $s_0\in (0,M/2)$ be a critical point of $r$. 
Let $A$ be the {\em open Birkhoff annulus} associated to the corresponding positively oriented equator: $A$ is the set of unit tangent vectors $u$ based at points on $P_{s_0}$ such that $\beta(u)\in (0,\pi)$. The geodesic flow is transverse to the open annulus $A$.

\begin{lem}
\label{welldef}
Assume that $s_0\in (0,M/2)$ is a critical point of $r$ such that $r(s_0)$ is the minimum over all critical values of $r$ on $(0,M/2)$. Then the forward and backward evolutions of any vector in the corresponding Birkhoff annulus $A$ meet $A$ again.
\end{lem}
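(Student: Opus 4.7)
The plan is to exploit Lemma \ref{gengeo} together with the minimality of $r(s_0)$ among critical values of $r$ on $(0,M/2)$. For any $u = \dot\gamma(0) \in A$, the Clairaut integral is $K(u) = r(s_0)\cos\beta(u)$, and since $\beta(u) \in (0,\pi)$ we have $|K(u)| < r(s_0)$, with equality to $0$ precisely in the meridian case $\beta(u) = \pi/2$. I would dispose of that degenerate case first: a meridian is a closed geodesic of length $M$, so $\phi^M(u) = u \in A$ and $\phi^{-M}(u) = u \in A$, and there is nothing more to check.

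Assume from now on that $\gamma$ is not a meridian, so $0 < |K(u)| < r(s_0)$, and apply Lemma \ref{gengeo}. In alternative (i) the geodesic $\gamma$ is asymptotic to equators $P_{s_\pm}$ with $r(s_\pm) = |K(u)| < r(s_0)$; but $s_\pm$ are critical points of $r$ lying in $(0,M/2)$, which contradicts the hypothesis that $r(s_0)$ is the minimum of $r$ over all such critical values. Hence alternative (ii) must hold: the $s$-coordinate of $\gamma$ oscillates between values $0 < s_1 < s_2 < M/2$ with $r(s_1) = r(s_2) = |K(u)|$ and $r > |K(u)|$ on $(s_1,s_2)$, and $\gamma$ alternately touches $P_{s_1}$ and $P_{s_2}$ tangentially at an unbounded set of instants in both forward and backward time.

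The remaining task is to show that this oscillation crosses $P_{s_0}$ with the correct orientation. Since $s(0) = s_0$ and the orbit is confined to $[s_1,s_2]$, we have $s_0 \in [s_1,s_2]$; but $r(s_0) > r(s_1) = r(s_2)$ forces $s_0 \in (s_1,s_2)$ strictly. Between any two consecutive tangency instants the function $s(t)$ monotonically traverses the full interval $[s_1,s_2]$, so by the intermediate value theorem $s(t) = s_0$ at infinitely many $t > 0$ and infinitely many $t < 0$, with $\dot s$ alternating in sign between consecutive such crossings. The ``upward'' crossings have $\dot s(t) = \sin\beta(t) > 0$, i.e.\ $\beta(t) \in (0,\pi)$, which means $\phi^t(u) \in A$. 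There are therefore infinitely many positive and infinitely many negative return times, which is more than what is needed.

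I do not anticipate a major obstacle: the structural result Lemma \ref{gengeo} does all the heavy dynamical work, and the minimality hypothesis enters at a single clean point, namely to rule out alternative (i). The only subtle observation is that $s_0$ lies \emph{strictly} inside the oscillation interval $(s_1,s_2)$, which follows immediately from the strict inequality $|K(u)| < r(s_0)$ available on $A$.
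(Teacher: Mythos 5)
Your proof is correct and follows essentially the same route as the paper: invoke Lemma \ref{gengeo}, rule out alternative (i) via the minimality of $r(s_0)$ among critical values, and in alternative (ii) observe that $s_0$ lies strictly inside $(s_1,s_2)$ so that the oscillation of $s(t)$ crosses $s_0$ with $\dot s>0$ infinitely often in both time directions. The only (immaterial) difference is that you dispose of the meridian case by periodicity $\phi^{M}(u)=u$, whereas the paper simply notes that a meridian visibly recrosses the equator with increasing $s$.
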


\begin{proof}
Let $u$ be an element of $A$ and denote by $\gamma_u:\R \rightarrow S$ the geodesic such that $\dot\gamma_u(0)=u$. Then
\begin{equation}
\label{sotto}
|K(u)|=r(s_0) |\cos \beta(u)| < r(s_0).
\end{equation}
We must show that $\gamma_u$ hits the equator $P_{s_0}$ with positive derivative of its $s$ component both in the future and in the past. This is certainly true if this geodesic is a meridian, so there remains to consider the two cases (i) and (ii) from Lemma \ref{gengeo}. 

Case (i) cannot occur: In this case, $\gamma_u$ would be asymptotic to an equator $P_{s_+}$, $s_+\in (0,M/2)$, with $r(s_+)=|K(u)|$ and by (\ref{sotto}) the point $s_+\in (0,M/2)$ would be a critical point of $r$ with $r(s_+)<r(s_0)$, contradicting our choice of $s_0$. 

Therefore, case (ii) occurs and $\gamma_u$ is confined in a strip
\[
\bigcup_{s\in [s_1,s_2]} P_s,
\]
and it alternately touches each parallel $P_{s_1}$ and $P_{s_2}$, which are not equators, infinitely many times in the past and in the future. The fact that $\gamma_u(0)$ belongs to $P_{s_0}$ forces $s_0$ to belong to the open interval $(s_1,s_2)$. Then the fact that $\gamma_u$ touches $P_{s_1}$ and $P_{s_2}$ infinitely many times in the past 
and in the future and that it can never be tangent to $P_{s_0}$ implies that $\gamma_u$ hits the equator $P_{s_0}$ with positive derivative of its $s$ component infinitely many times in the past and in the future. This concludes the proof.
\end{proof} 

Let $s_0\in (0,M/2)$ be a critical point of $r$ with minimal value of $r$ as in the lemma above and let $A$ be the Birkhoff annulus corresponding to the positively oriented equator $P_{s_0}$. By the above lemma, the first return map and first return time to $A$ are well defined:
\[
\begin{split}
\tau: A \rightarrow (0,+\infty), &\qquad \tau(u) := \min \{t\in (0,+\infty) \mid \phi^t(u)\in A\}, \\
\varphi: A \rightarrow A, &\qquad \varphi(u) = \phi^{\tau(u)}(u).
\end{split}
\]
By the transversality of the geodesic flow to $A$, the function $\tau$ is smooth on $A$ and $\varphi: A \rightarrow A$ is a smooth diffeomorphism. 

Notice that the value of the Clairaut integral at every $u\in A$ satisfies the inequality
\[
|K(u)| = r(s_0) |\cos \beta(t)| < r(s_0).
\]
Therefore, the flow saturation of the annulus $A$, that is the set
\[
\bigcup_{t\in \R} \phi^t(A),
\]
is disjoint from the set of unit vectors $u\in T^1 S$ with $|K(u)|\geq r(s_0)$. When $r(s_0)$ is the maximum of $r$ - this happens if and only if the function $r$ has a unique positive critical value - the latter set is precisely the set of unit vectors which are tangent to the equators $P_s$ with $r(s)=\max r$; these values of $s$ form a closed interval containing $s_0$ which is just the singleton $\{s_0\}$ when $s_0$ is a strict maximizer of $r$. When the maximum of $r$ is larger than $r(s_0)$, the complement of the flow saturation of $A$ has non-empty interior and hence positive contact volume. Actually, we can show that the complement of the  flow saturation of $A$ consists precisely of those vectors $u$ in $T^1 S$ for which $|K(u)|\geq r(s_0)$. Equivalently, the following result holds:

\begin{lem}
\label{complemento}
Assume that $s_0\in (0,M/2)$ is a critical point of $r$ such that $r(s_0)$ is the minimum over all critical values of $r$ on $(0,M/2)$ and let $A$ be the open Birkhoff annulus corresponding to the positively oriented equator $P_{s_0}$. Then
\[
\bigcup_{t\in \R} \phi^t(A) = \{ u\in T^1 S \mid |K(u)|< r(s_0)\}.
\]
\end{lem}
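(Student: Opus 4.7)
The plan is to prove the two inclusions separately. The inclusion $\bigcup_{t \in \R} \phi^t(A) \subseteq \{u \in T^1 S \mid |K(u)| < r(s_0)\}$ is already established in the paragraph preceding the statement: every $u \in A$ satisfies $|K(u)| = r(s_0) |\cos \beta(u)| < r(s_0)$, and $K$ is a first integral of the geodesic flow, so the entire flow saturation of $A$ lies in the open set $\{|K| < r(s_0)\}$.

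For the reverse inclusion, fix $u \in T^1 S$ with $|K(u)| < r(s_0)$; the goal is to produce $t \in \R$ such that $\phi^t(u) \in A$. I would split into two cases according to whether $\gamma_u$ is a meridian. If $\gamma_u$ is a meridian (which in particular covers the case $u \in T^1_{p_S} S \cup T^1_{p_N} S$), then $\gamma_u$ passes through both poles and so crosses $P_{s_0}$; selecting a crossing time at which $\gamma_u$ is moving towards the north pole, we have $\dot{s} > 0$ and hence $\beta = \pi/2 \in (0,\pi)$ at that instant, so $\phi^t(u) \in A$.

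If $\gamma_u$ is not a meridian, I would invoke Lemma \ref{gengeo}. Alternative (i) cannot occur by exactly the argument used in the proof of Lemma \ref{welldef}: it would produce a critical point of $r$ on $(0, M/2)$ of value $|K(u)| < r(s_0)$, contradicting the minimality of $r(s_0)$. So alternative (ii) holds and $\gamma_u$ is confined to a strip $\bigcup_{s \in [s_1, s_2]} P_s$, oscillating between $P_{s_1}$ and $P_{s_2}$ with $r(s_1) = r(s_2) = |K(u)|$ and $r > |K(u)|$ on $(s_1, s_2)$. The decisive step is then to check that $s_0$ lies in the open interval $(s_1, s_2)$: once this is known, the ascending phases of $\gamma_u$ (where $\dot{s} > 0$, i.e.\ $\sin \beta > 0$) must cross $P_{s_0}$, and at such a crossing the velocity vector has $\beta \in (0, \pi)$ and therefore lies in $A$.

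The main technical point, although still elementary, is the inclusion $s_0 \in (s_1, s_2)$. Using $r(0) = r(M/2) = 0$ together with the hypothesis that every critical value of $r$ on $(0, M/2)$ is at least $r(s_0)$, I would argue that $\{s \in (0, M/2) \mid r(s) > |K(u)|\}$ is a single open interval $(a, b)$ containing all interior critical points of $r$: indeed, an interior connected component of the sublevel set $\{r \leq |K(u)|\}$ would be a closed interval on whose interior $r$ would have to attain a local minimum of value at most $|K(u)| < r(s_0)$, contradicting the assumption on $r(s_0)$. Since $s_1$ and $s_2$ are consecutive points of the level set $\{r = |K(u)|\}$ with $r > |K(u)|$ strictly between them, they must coincide with $a$ and $b$, so $s_0 \in (a, b) = (s_1, s_2)$, which completes the plan.
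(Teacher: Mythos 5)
Your proposal is correct and follows essentially the same route as the paper: the forward inclusion via conservation of the Clairaut integral, the meridian case handled directly, and in the non-meridian case alternative (i) of Lemma \ref{gengeo} excluded by the minimality of $r(s_0)$, with the decisive point being $s_0\in(s_1,s_2)$. Your connectedness argument for the superlevel set $\{r>|K(u)|\}$ is only a slight repackaging of the paper's observation that otherwise $r$ would attain an interior minimum on $[s_1,s_0]$ giving a critical value below $r(s_0)$.
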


\begin{proof}
The inclusion $\subset$ having  been already clarified, we must show that the orbit of any $u\in T^1 S$ with $|K(u)|<r(s_0)$ by the geodesic flow hits  the Birkhoff annulus $A$. Let $\gamma: \R \rightarrow S$ be the geodesic determined by $u$. If $K(u)=0$ then $\gamma$ is parametrizing a meridian and the orbit of $u$ certainly meets $A$. Therefore, we may assume that $\gamma$ is not a meridian, so one of the two alternative conditions of Lemma \ref{gengeo} holds. If (i) holds, then there is an equator $P_{s_+}$ with $r(s_+)=|K(u)|<r(s_0)$, and this is not possible because we are assuming that $r(s_0)$ is the minimum over all positive critical values of $r$. Therefore, condition (ii) must hold and the geodesic $\gamma$ alternately touches the two parallels $P_{s_1}$ and $P_{s_2}$, where $0<s_1<s_2<M/2$ and
\[
r(s_1)=r(s_2) = |K(u)| < r(s_0).
\]
The above inequality implies that $s_1<s_0<s_2$: if not, assuming without loss of generality that $s_1<s_2<s_0$, we would get that the minimum of $r$ on $[s_1,s_0]$ must be achieved at some point $s\in (s_1,s_0)$, and this point would be a critical point of $r$ with $r(s)<r(s_0)$. The fact that $s_0$ is between $s_1$ and $s_2$ ensures that $\gamma$ crosses the equator $P_{s_0}$ in the direction of increasing values of $s$, and at this instant the orbit of $u$ meets $A$.
\end{proof}

\section{The generating function}

Let $s_0\in (0,M/2)$ be a critical point of $r$ such that $r(s_0)$ is the minimum over all critical values of $r$ on $(0,M/2)$ and let $A$ be the open Birkhoff annulus corresponding to the positively oriented equator $P_{s_0}$, as in the previous section. The elements of $A$ can be parametrized by pairs $(\xi,\beta)\in \R/L \Z \times (0,\pi)$. Here $L=2\pi r(s_0)$ denotes the length of the equator $P_{s_0}$, the variable
\[
\xi:= r(s_0) \theta \in \R/L \Z
\]
parametrizes the base point of $u\in A$ by arc length in the positive direction, $\theta\in \R/2\pi \Z$ being the angular coordinate as in the previous section, and $\beta$ is the angle between the unit tangent vector and the positive direction of the equator, again as in the previous section. It will be convenient to use coordinates $(\xi,\eta)\in \R/L \Z \times (-1,1)$ on $A$, where $\eta = - \cos \beta$.

By the identity (\ref{contact}), the Hilbert contact form of $T^1 S$ restricts to the following 1-form on $A$:
\[
\lambda := \cos \beta \, d\xi = - \eta\, d\xi.
\]
This 1-form is a primitive of the area form 
\begin{equation}
\label{omega}
\omega := \sin \beta \, d\xi \wedge d\beta = d\xi\wedge d\eta 
\end{equation}
on $A$ and satisfies
\begin{equation}
\label{fund}
\varphi^* \lambda - \lambda = d \tau.
\end{equation}
The latter identity relating the first return map and the first return time is a general feature of surfaces of section in three-dimensional Reeb dynamics. Its simple proof can be found for instance in     
\cite[Section 3.2]{abhs17} and \cite[Equation (98)]{abhs18}.
By differentiating this identity, we see that $\varphi$ preserves the area form $\omega$. 

Since the restriction of the Clairaut integral to $A$ is $r(s_0) \cos \beta = - r(s_0) \eta$, the second component of $(\xi,\eta)$ is preserved by the first return map $\varphi$. By the rotational symmetry of $S$, $\varphi$ commutes with the translations on $\R/L \Z$. We conclude that $\varphi$ has the following form
\[
\varphi(\xi,\eta) = (\xi+f(\eta),\eta) \qquad \forall (\xi,\eta) \in \R/L \Z \times (-1,1),
\]
where $f: (-1,1) \rightarrow \R$ is a smooth function which is uniquely defined up to the sum of a multiple of $L$. The fact that meridians are closed geodesics implies that the points of the form $(\xi,0)$ are fixed by $\varphi$, and hence we can normalize $f$ by requiring:
\[
f(0)=0.
\]
The symmetry of $S$ with respect to reflections with respect to planes containing the $z$-axis implies that $f$ is an odd function.  

Let $F: (-1,1) \rightarrow \R$ be a primitive of $f$. Notice that
\[
\varphi^* \lambda - \lambda = -\eta \, d(\xi+f(\eta)) + \eta\, d\xi = - \eta f'(\eta)\, d\eta = - \eta F''(\eta) \, d\eta = d \bigl( F(\eta) - \eta F'(\eta) \bigr).
\]
The above identity and (\ref{fund}) imply that the functions $\tau$ and $(\xi,\eta) \mapsto F(\eta) - \eta F'(\eta)$ differ by a constant. We can therefore normalize the primitive $F$ of $f$ in such a way that
\[
\tau(\xi,\eta) = F(\eta) - \eta F'(\eta) \qquad \forall (\xi,\eta)\in \R/L \Z \times (-1,1).
\]
In other words, we are normalizing $F$ in such a way that $F(0)$ is the length $M$ of the meridians.
Being a primitive of an odd function, the function $F$ is even. We summarize the above discussion in the following:

\begin{lem}
\label{genfun}
Let $A \cong \R/L \Z \times (-1,1)$ be the Birkhoff annulus associated to a positively oriented equator having minimal length $L$ among all equators. Then the first return map $\varphi: A \rightarrow A$ and first return time $\tau: A \rightarrow (0,+\infty)$ to $A$ have the form
\[
\varphi(\xi,\eta) = (\xi+F'(\eta),\eta), \qquad \tau(\xi,\eta) = F(\eta) - \eta F'(\eta),
\]
where $F:(-1,1) \rightarrow \R$ is an even smooth function.
\end{lem}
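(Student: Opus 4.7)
The plan is to exploit three ingredients: the preservation of the Clairaut integral, the rotational symmetry of $S$, and the general identity $\varphi^* \lambda - \lambda = d\tau$ that holds for any surface of section in a three-dimensional Reeb flow.

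First I would restrict the Hilbert contact form $\alpha$ of equation (\ref{contact}) to $A$. At a point of $A$, the base point lies on the equator $P_{s_0}$ so $ds=0$, and the tangential component in the $\theta$ direction gives $\alpha|_A = r(s_0) \cos\beta\, d\theta = \cos\beta\, d\xi = -\eta\, d\xi$. Call this 1-form $\lambda$. Since $d\lambda = d\xi\wedge d\eta$ is the area form $\omega$, and since $\varphi$ preserves $\omega$ (this being a general fact for Poincar\'e return maps of Reeb flows, as recalled in the text via (\ref{fund})), we are in a setting where all the structure is encoded in $\lambda$ and $\tau$.

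Next I would use the Clairaut integral $K(u) = r(s_0)\cos\beta = -r(s_0)\eta$ on $A$: conservation of $K$ along the geodesic flow implies that the $\eta$-coordinate is preserved by $\varphi$. Rotational symmetry of $S$ around the $z$-axis means the geodesic flow commutes with the $S^1$-action rotating $\xi$, so $\varphi$ is equivariant under $\xi$-translations. Together these two facts force $\varphi$ to have the form $\varphi(\xi,\eta) = (\xi + f(\eta), \eta)$ for some smooth $f: (-1,1) \to \R$ (well-defined modulo $L\Z$). The fixed points $(\xi,0)$ coming from the meridians normalize $f$ by $f(0)=0$, and the reflection symmetry of $S$ across planes containing the $z$-axis sends $\eta \mapsto -\eta$ and reverses $\xi$, which forces $f$ to be odd.

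Then I would integrate: let $F$ be a primitive of $f$. A direct computation gives
\[
\varphi^* \lambda - \lambda = -\eta\, d(\xi + f(\eta)) + \eta\, d\xi = -\eta f'(\eta)\, d\eta = d\bigl(F(\eta) - \eta F'(\eta)\bigr).
\]
Comparing with $\varphi^*\lambda - \lambda = d\tau$ shows that $\tau$ and $\eta \mapsto F(\eta) - \eta F'(\eta)$ differ by an additive constant on $A$, and adjusting the constant of integration in $F$ absorbs this difference; normalizing so that $F(0)$ equals the length $M$ of meridians fixes the convention. Finally, since $F$ is a primitive of the odd function $f$, up to this additive constant $F$ is even.

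The only step with any subtlety is verifying that $\varphi^*\lambda - \lambda = d\tau$ genuinely applies in our setting; but as the text already notes this identity is the standard Poincar\'e-return relation for Reeb surfaces of section, proved for example in \cite[Section 3.2]{abhs17}, so it can be invoked as a black box. Everything else is a bookkeeping consequence of the two symmetries (rotation and reflection) and the Clairaut first integral.
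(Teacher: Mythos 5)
Your proposal is correct and follows essentially the same route as the paper's own argument: restrict the Hilbert form to $A$ to get $\lambda=-\eta\,d\xi$, use Clairaut plus rotational symmetry to get $\varphi(\xi,\eta)=(\xi+f(\eta),\eta)$ with $f(0)=0$ and $f$ odd by reflection symmetry, then compare $\varphi^*\lambda-\lambda=d\bigl(F(\eta)-\eta F'(\eta)\bigr)$ with the Reeb identity $\varphi^*\lambda-\lambda=d\tau$ and normalize the primitive $F$. All steps, including the normalization $F(0)=M$ and the evenness of $F$ as a primitive of an odd function, match the paper's proof.
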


We shall refer to the above function $F$ as the {\em generating function} of the first return map $\varphi$.

\section{Properties of the generating function}

We fix an equator $P_{s_0}$ which is assumed to have minimal length $L = 2 \pi r(s_0)$ among all equators. We denote by $A$ the corresponding open Birkhoff annulus and we endow it with coordinates $(\xi,\eta)\in \R/L \Z \times (-1,1)$ as in the previous section. The first return time to $A$ is denoted by $\tau:A \rightarrow \R$, the first return map by $\varphi: A \rightarrow A$, and its generating function by $F:(-1,1)\rightarrow \R$. In this section, we want to establish some further properties of $F$ and to express some geometric and dynamical quantities in terms of $F$.

Let $u\in A$ be given by $(\xi,\eta)\in \R/L \Z \times (-1,1)$ with $\eta\neq 0$. Then the geodesic $\gamma_u$ with initial vector $u$ is not a meridian and hence does not touch the $z$-axis. As such, its portion $\gamma_u|_{[0,\tau(u)]}$ has a winding number $W(u)$ with respect to the $z$-axis: $W(u)$ is the real number
\[
W(u) := \frac{\theta(\tau(u))-\theta(0)}{2\pi},
\]
where $\theta: \R \rightarrow \R$ is a continuous function such that
\[
\gamma_u(t) = \bigl(r(s(t)) \cos \theta(t), r(s(t))\sin \theta(t),z(s(t))\bigr) \qquad \forall t\in \R,
\]
for a suitable smooth function $s: \R \rightarrow (0,M/2)$. The rotational symmetry of $S$ implies that the winding number $W(u)$ depends only on $\eta\in (-1,1)\setminus \{0\}$, and hence we shall also indicate it by $W(\eta)$.

\begin{lem}
\label{winding}
The winding number $W$ and the generating function $F$ are related by the identities
\[
\begin{split}
F'(\eta) &= L W(\eta) + L \qquad \forall \eta\in (0,1), \\
F'(\eta) &= L W(\eta) - L \qquad \forall \eta\in (-1,0).
\end{split}
\]
\end{lem}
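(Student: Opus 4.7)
My plan is to match $F'(\eta)$ with the angular shift of $\xi$ along a continuously lifted orbit of $\gamma_u$, giving an identity modulo $L$, and then to fix the integer ambiguity by a one-sided limit at $\eta = 0$.

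First, for $\eta \in (-1,1) \setminus \{0\}$, the geodesic $\gamma_u$ is not a meridian and hence avoids the two poles by Lemma \ref{gengeo}. The angular coordinate $\theta$ therefore admits a continuous lift along $\gamma_u|_{[0,\tau(u)]}$, and $\theta(\tau(u)) - \theta(0) = 2\pi W(\eta)$ by definition of the winding number. Since $\xi = r(s_0)\theta$ on the equator and $L = 2\pi r(s_0)$, the corresponding lifted $\xi$-increment is $L W(\eta)$, which by Lemma \ref{genfun} agrees modulo $L$ with $F'(\eta)$. Hence
\[
F'(\eta) - L W(\eta) \in L\,\Z \qquad \forall \eta \in (-1,1)\setminus \{0\}.
\]
By smooth dependence of solutions of the geodesic equations on initial conditions, $W$ is continuous on $(-1,0)$ and on $(0,1)$ (for $\eta$ in a compact subinterval of either, the turning points $s_1(\eta), s_2(\eta)$ stay in a compact subset of $(0,M/2)$), and $F'$ is smooth on $(-1,1)$. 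The integer $n(\eta) := (F'(\eta) - L W(\eta))/L$ is therefore constant on each of the two half-intervals.

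To identify these two constants one takes one-sided limits at $\eta = 0$. On the one hand, $F'$ is continuous at $0$ with $F'(0) = 0$ by the normalization of $f = F'$ preceding Lemma \ref{genfun}. On the other hand, I claim
\[
\lim_{\eta \to 0^+} W(\eta) = -1, \qquad \lim_{\eta \to 0^-} W(\eta) = +1,
\]
which yield $n = 1$ on $(0,1)$ and $n = -1$ on $(-1,0)$, completing the proof. The second identity follows from the first via the reflection isometry of $S$ across a plane containing the $z$-axis: this sends $(\xi,\eta) \mapsto (-\xi, -\eta)$ on $A$, underlies the oddness of $F'$ noted before Lemma \ref{genfun}, and forces $W(-\eta) = -W(\eta)$.

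The main technical step is thus the limit $W(\eta) \to -1$ as $\eta \to 0^+$. Combining Clairaut's integral with (\ref{geoeq1}),
\[
2\pi W(\eta) = \int_0^{\tau(u)} \frac{\cos \beta(t)}{r(s(t))}\, dt = K \int_0^{\tau(u)} \frac{dt}{r(s(t))^2}, \qquad K = -r(s_0)\eta.
\]
The contribution from the part of the orbit on which $s$ is bounded away from $0$ and $M/2$ is $O(\eta)$, since $|K| = O(\eta)$ while $r$ is bounded below and $\tau(u)$ is uniformly bounded there. For the near-pole contribution I would introduce coordinates $(u,\theta)$ with $u$ the arc-length distance to the pole, in which the metric $du^2 + r(s)^2 d\theta^2$ is $C^1$-close to the Euclidean polar metric $du^2 + u^2 d\theta^2$ (using $r(M/2) = 0$, $r'(M/2) = -1$, and symmetrically at the south pole). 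In this flat approximation the near-pole segment of $\gamma_u$ becomes a straight line at distance $|K|$ from the pole, whose polar angle sweeps through $-\pi + O(\eta)$ as the line enters and exits a fixed disk around the pole; the negative sign is dictated by $K < 0$ (equivalently $\dot\theta < 0$). A uniform perturbation estimate transfers this to the true metric, and summing the contributions from the two pole passes yields $2\pi W(\eta) = -2\pi + O(\eta)$, proving the claim.
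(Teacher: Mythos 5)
Your proof is correct and follows the same skeleton as the paper's: first establish $F'(\eta)-LW(\eta)\in L\Z$, note that this integer is locally constant on each of $(-1,0)$ and $(0,1)$, pin it down by letting $\eta\to 0^{\pm}$ using $F'(0)=f(0)=0$, and pass from one half-interval to the other by the oddness of $F'$ and $W$ coming from the reflection symmetry. Where you genuinely diverge is in the crux, the limit $W(\eta)\to\mp 1$: the paper argues softly, observing that the projection of $\gamma_u|_{[0,\tau(u)]}$ onto the $(x,y)$-plane converges in the $C^\infty$ topology to the projection of the meridian (a closed curve crossing the origin exactly twice transversally), so that any nearby curve avoiding the origin has winding number close to $0$ or $\pm 1$, the sign of $\dot\theta$ then selecting the right value; you instead use Clairaut's relation to write $2\pi W(\eta)=K\int_0^{\tau(u)} r(s(t))^{-2}\,dt$ and estimate the two near-pole passes against the flat polar model, each contributing $\mp\pi+O(\eta)$. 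Your route is more quantitative (it even gives the rate $W(\eta)=\mp 1+O(\eta)$) and it sidesteps the paper's slightly delicate claim that $W(\eta)$ is ``bounded away from zero,'' since you identify the limit directly rather than by excluding the candidates $0$ and $-1$; the price is that the transfer from the flat model to the true metric near the poles is only sketched, though this is comparable in rigor to the paper's own appeal to the behaviour of $C^1$-small perturbations, and can be made precise by integrating $d\theta/ds = K/(r(s)^2\sqrt{1-K^2/r(s)^2}\,)$ over the turning region. Both arguments are sound.
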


\begin{proof}
From the form $\xi+f(\eta)=\xi + F'(\eta)$ of the first component of $\varphi(\xi,\eta)$ and from the definition of $W(\eta)$ it follows that the quantity
\[
F'(\eta) - L W(\eta) = f(\eta) - L W(\eta)
\]
is an integer multiple of $L$. By continuity of both $f$ and $W$ on the interval $(-1,0)$, we deduce that there is some integer $m$ such that
\begin{equation}
\label{intero}
F'(\eta) - L W(\eta) = m L \qquad \forall \eta\in (-1,0).
\end{equation}
When $\eta$ is negative and close to zero, the corresponding angle $\beta=\arccos (-\eta)$  is close to $\pi/2$ and smaller than $\pi/2$. By equation (\ref{geoeq1}), the time derivative of the function $\theta(t)$ is positive, so the winding number $W(\eta)$ must be positive and bounded away from zero as $\eta$ tends to zero from below. As $\eta$ converges to $0$,  $\tau(u)$ converges to the length $M$ of the meridian and the projection of the geodesic segment $\gamma_u|_{[0,\tau(u)]}$ onto the $x,y$-plane, that is the curve 
\[
t\mapsto (r(s(t)) \cos \theta(t), r(s(t)) \sin \theta(t) ),
\]
converges in the $C^{\infty}$-topology to the projection of the arc-length parametrization of the meridian on the interval $[0,M]$. The latter curve is closed, has as image a segment,  and crosses the origin exactly twice with non-zero speed. Any $C^1$-small perturbation of this curve avoiding the origin will have winding number close to either 1, -1 or 0. Being positive and bounded away from zero, $W(\eta)$ is close to $1$ when $\eta$ is close to zero and negative. Together with the fact that $F'=f$ is continuous and vanishes at $0$, we obtain that the integer $m$ appearing in (\ref{intero}) has the value $-1$, and hence
\[
F'(\eta) = L W(\eta) - L \qquad \forall \eta\in (-1,0).
\]
The analogous formula for $\eta\in (0,1)$ follows from the fact that both $F'$ and $W$ are odd functions.
\end{proof}

The above Lemma can be used to bound the function $F$ from below:

\begin{lem}
\label{stima}
Let $A\cong \R/L \Z \times (-1,1)$ be the Birkhoff annulus associated to a positively oriented equator $P_{s_0}$ having minimal length $L$ among all equators. Then the function $F$ from Lemma \ref{genfun} has the lower bound
\[
F(\eta) > L |\eta| \qquad \forall \eta\in (-1,1).
\]
\end{lem}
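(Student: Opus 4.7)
The plan is to combine the identity $\tau(\xi,\eta) = F(\eta) - \eta F'(\eta)$ from Lemma~\ref{genfun} with the formula $F'(\eta) = L(W(\eta)+1)$ for $\eta>0$ from Lemma~\ref{winding}, and then estimate the first return time against the winding number via Clairaut's relation. Since $F$ is even, it suffices to prove $F(\eta) > L\eta$ for $\eta \in [0,1)$; the case $\eta = 0$ is immediate because $F(0) = M > 0$.

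For $\eta \in (0,1)$, substituting the formula for $F'$ into $F(\eta) = \tau + \eta F'(\eta)$ produces the key identity
\[
F(\eta) - L\eta \;=\; \tau(\xi,\eta) + \eta L\, W(\eta).
\]
At $u = (\xi,\eta) \in A$ with $\eta > 0$ the Clairaut integral $K = -r(s_0)\eta$ is strictly negative, so $\dot\theta = K/r^2 < 0$ throughout the geodesic and in particular $W(\eta) < 0$. The target inequality therefore reduces to the geometric statement $\tau(\xi,\eta) > -\eta L\, W(\eta)$.

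By Lemma~\ref{welldef} the geodesic $\gamma_u$ returns to $A$, so by Lemma~\ref{gengeo} it falls into case (ii): it oscillates between two parallels $P_{s_1}, P_{s_2}$ with $r(s_1) = r(s_2) = |K|$ and $s_1 < s_0 < s_2$. Decomposing the first return into its three monotone legs (up to $s_2$, down to $s_1$, up to $s_0$) and using $\sin\beta = \pm\sqrt{1 - K^2/r^2}$ to convert $dt$ into $ds$, the standard Clairaut computation yields
\[
\tau \;=\; 2\int_{s_1}^{s_2}\frac{r\,ds}{\sqrt{r^2-K^2}}, \qquad 2\pi W(\eta) \;=\; 2K\int_{s_1}^{s_2}\frac{ds}{r\sqrt{r^2-K^2}}.
\]
Multiplying the second identity by $\eta L/(2\pi) = \eta r(s_0)$, using the relation $\eta r(s_0) K = -K^2$, and adding the result to the first produces
\[
\tau + \eta L\, W(\eta) \;=\; 2\int_{s_1}^{s_2}\frac{\sqrt{r^2-K^2}}{r}\,ds,
\]
which is strictly positive since $r > |K|$ on $(s_1,s_2)$. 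I expect no real obstacle beyond careful sign bookkeeping across the three monotone legs of the orbit; once the Clairaut integrals for $\tau$ and $\Delta\theta$ are in place, the inequality falls out of a single line of algebra combined with the preliminary identity $F(\eta) - L\eta = \tau + \eta L\, W(\eta)$.
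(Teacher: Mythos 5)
Your proof is correct, but it reaches the key inequality by a genuinely different route than the paper. Both arguments share the same algebraic reduction: from $\tau=F-\eta F'$ (Lemma~\ref{genfun}) and $F'=LW+L$ on $(0,1)$ (Lemma~\ref{winding}) one gets $F(\eta)-L\eta=\tau+\eta L\,W(\eta)$, so everything hinges on showing $\tau>-\eta L\,W(\eta)$. The paper proves this softly: the geodesic arc from $u$ to $\varphi(u)$ is strictly longer than its projection to the $(x,y)$-plane (it is nowhere horizontal), and that projection winds $|W(\eta)|$ times around the origin while staying at distance at least $r(s_0)\eta$ by Clairaut, hence has length at least $2\pi r(s_0)\eta|W(\eta)|=L\eta|W(\eta)|\geq -L\eta W(\eta)$. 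You instead compute $\tau$ and $2\pi W(\eta)$ exactly as Clairaut quadratures over $[s_1,s_2]$ and combine them into the identity $\tau+\eta L W(\eta)=2\int_{s_1}^{s_2}\sqrt{r^2-K^2}\,r^{-1}\,ds>0$; the sign bookkeeping over the three monotone legs and the factor of $2$ are right, and $W(\eta)<0$ indeed follows from $\dot\theta=K/r^2<0$. What your route buys is an exact formula for $F(\eta)-L\eta$ rather than just positivity; in fact your line integral is, by Green's theorem, precisely the area integral $\int_{\Omega_{\kappa}}\cos\beta\,d\beta\wedge ds$ of the paper's Lemma~\ref{lemma-area}, so you have effectively anticipated that stronger result. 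What the paper's route buys is the avoidance of improper integrals: your quadratures blow up like an inverse square root at the turning points $s_1,s_2$, and convergence (as well as the applicability of case (ii) of Lemma~\ref{gengeo} with $s_1<s_0<s_2$, which holds because $K\neq 0$ rules out meridians, $\dot s(0)\neq 0$ rules out equators, and case (i) is excluded exactly as in Lemma~\ref{welldef}) needs the facts $r'(s_1)>0$ and $r'(s_2)<0$; these are supplied by Lemma~\ref{gengeo}(ii), but you should say so explicitly when writing the argument out in full.
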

   
\begin{proof}
Fix some $\xi\in \R/L \Z$ and $\eta\in (0,1)$. The corresponding unit vector $u\in A$ makes an angle $\beta= \arccos (-\eta) \in (\pi/2,\pi)$ with the positive direction of the equator. The value of the Clairaut integral on the geodesic emanating from $u$ is the negative number
\[
r(s_0) \cos \beta = - r(s_0) \eta.
\]
By the invariance of the Clairaut integral, the distance of this geodesic from the $z$-axis never gets smaller than the absolute value of the above number, that is $r(s_0) \eta$. The geodesic arc from $u$ to $\varphi(u)$ has length
\begin{equation}
\label{tau}
\tau(u) = F(\eta) - \eta F'(\eta).
\end{equation}
Since this geodesic arc is not contained in a horizontal plane, its length is strictly larger then the length of its projection onto the $(x,y)$-plane. Since this projected curve remains at distance at least $r(s_0)\eta$ from the origin and has winding number $W(\eta)$ around this point, its length is not smaller than
\[
2 \pi r(s_0) \eta |W(\eta)| = L \eta |W(\eta)|.
\]
Therefore, we have
\[
\tau(u) >  L\eta |W(\eta)| \geq - L \eta W(\eta).
\]
By the first formula in Lemma \ref{winding} we find
\[
\tau(u) > - \eta( f(\eta) - L ) = - \eta F'(\eta) + L \eta.
\]
Together with (\ref{tau}) we conclude that
\[
F(\eta) > L \eta \qquad \forall \eta\in (0,1).
\]
The desired conclusion follows from the fact that $F$ is even and from the bound $F(0)>0$, which holds because $F(0)$ is the length of meridians. 
\end{proof}

In order to understand the behaviour of the generating function $F$ near $-1$ and $1$ we shall express it in terms of an integral formula involving the area of suitable superlevels of the Clairaut integral. We start by noticing that, being independent of $\theta$, the Clairaut integral defines a function on the closed annulus $\R/2\pi \Z\times [0,M/2]$, which we still denote by $K$:
\[
K(\beta,s) = r(s) \cos \beta.
\]
By symmetry, we can restrict attention to the following closed rectangle $\overline{Q}$, where
\[
Q := \left(-\frac{\pi}{2},\frac{\pi}{2}\right) \times \left(0,\frac{M}{2}\right).
\]
Notice that $K$ is positive on the open rectangle $Q$ and vanishes on its boundary. The formula
\[
dK(\beta,s) = - r(s) \sin \beta \, d\beta +  r'(s) \cos \beta \, ds
\]
shows that the critical points of $K$ in $Q$ are precisely the points $(0,s)$, where $s$ is a critical point of $r$. The critical values of $K|_Q$ are exactly the critical values of $r$. Since $s_0$ is a critical point of $r|_{(0,M/2)}$ with minimal value among all critical points, the interval $(0,r(s_0))$ consists of regular values for $K|_Q$. For every $\kappa\in (0,r(s_0))$ the level set $K^{-1}(\kappa)$ has the form
\[
K^{-1}(\kappa) = \left\{ (\beta,s) \in Q \mid \beta = \pm \arccos \frac{\kappa}{r(s)}, \; s\in r^{-1}([\kappa,+\infty)) \right\},
\]
where $r^{-1}([\kappa,+\infty))\subset (0,M/2)$ is a compact interval because the positive number $\kappa$ is smaller than the smallest critical value of $r$. Therefore, for these values of $\kappa$ the level set $K^{-1}(\kappa)$ is an embedded circle bounding  the set
\[
\Omega_{\kappa} := \{ (\beta,s)\in Q \mid K(\beta,s)>\kappa \},
\]
which is diffeomorphic to an open disk. The set $\Omega_0$ is the whole $Q$, and the sets $\Omega_{\kappa}$ for $\kappa<r(s_0)$ form a fundamental system of open neighborhoods of the compact set
\begin{equation}
\label{Gamma}
\Gamma := \{ (\beta,s)\in Q \mid K(\beta,s)\geq r(s_0) \}.
\end{equation}

\begin{lem}
\label{lemma-area}
The generating function $F$ can be expressed by the identity
\begin{equation}
\label{integral}
F(\eta) = \int_{\Omega_{\kappa(\eta)}} \cos \beta \, d\beta \wedge ds + L |\eta| \qquad \forall \eta\in (-1,1),
\end{equation}
where $\kappa(\eta):=|K(u)| = r(s_0) |\cos \beta(u)| = r(s_0) |\eta|$ is the absolute value of the Clairaut function $K$ on the unit tangent vector $u=(\xi,\eta)\in A$. In particular, $F$ extends continuously to the closed interval $[-1,1]$ by setting
\[
F(-1)=F(1)=\int_{\Gamma} \cos \beta \, d\beta \wedge ds + L.
\]
\end{lem}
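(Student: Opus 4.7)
The plan is to reduce the claimed identity to a line integral of $\sin\beta\,ds$ along the first-return geodesic arc, and then invoke Stokes' theorem to convert it into the area integral over $\Omega_{\kappa(\eta)}$. Since $\kappa(\eta)=r(s_0)|\eta|$ is even and $F$ is even by Lemma \ref{genfun}, the proof reduces to the case $\eta\in(-1,0)$, with the case $\eta=0$ verified separately.

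First, combining the identity $\tau=F(\eta)-\eta F'(\eta)$ from Lemma \ref{genfun} with $F'(\eta)=LW(\eta)-L$ from Lemma \ref{winding} gives $F(\eta)+L\eta=\tau+L\eta W(\eta)$. Since $\dot\gamma_u$ is the Reeb vector field of the Hilbert form $\alpha=r(s)\cos\beta\,d\theta+\sin\beta\,ds$, we have $\tau=\int_{\gamma_u}\alpha$. The constancy of the Clairaut integral $K=r(s)\cos\beta=-r(s_0)\eta$ along $\gamma_u$, together with $\int_{\gamma_u}d\theta=2\pi W(\eta)$, yields $\int_{\gamma_u}r(s)\cos\beta\,d\theta=-L\eta W(\eta)$, so the above identity becomes
$$F(\eta)-L|\eta|=F(\eta)+L\eta=\int_{\gamma_u}\sin\beta\,ds.$$

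The next step is to apply Stokes' theorem to this line integral. By Lemma \ref{gengeo} and the constancy of $K$, the projection of $\gamma_u|_{[0,\tau]}$ to the $(\beta,s)$-cylinder lies in $Q$ on the level set $\{K=\kappa\}$ which, by the description preceding the lemma, is the embedded circle $\partial\Omega_\kappa$ bounding the topological disk $\Omega_\kappa$. This projection is an immersion---$\dot\beta$ and $\dot s$ cannot vanish simultaneously on $\partial\Omega_\kappa$, since that would force $K=r(s_0)>\kappa$---and it cannot return to the initial point $(\beta_0,s_0)$ before time $\tau$, for otherwise the geodesic would have hit $A$ earlier, contradicting the first-return property. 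Hence it traces $\partial\Omega_\kappa$ exactly once. The orientation is pinned down at the starting point: since $r'(s_0)=0$, the gradient $\nabla K=(-r(s_0)\sin\beta_0,0)$ points horizontally into $\Omega_\kappa$, while the initial velocity $(\dot\beta,\dot s)=(0,\sin\beta_0)$ is vertical with $\sin\beta_0>0$, placing $\Omega_\kappa$ on the left of the direction of travel. Applying Stokes' theorem for $d(\sin\beta\,ds)=\cos\beta\,d\beta\wedge ds$ then yields
$$\int_{\gamma_u}\sin\beta\,ds=\int_{\Omega_\kappa}\cos\beta\,d\beta\wedge ds,$$
which is (\ref{integral}) for $\eta\in(-1,0)$. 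For $\eta=0$ one has $\int_Q\cos\beta\,d\beta\,ds=M$, matching the normalization $F(0)=M$.

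Finally, the continuous extension to $\eta=\pm1$ follows from the fact that $\Omega_{\kappa(\eta)}\searrow\Gamma$ (modulo a Lebesgue-null boundary) as $\eta\to\pm1$: dominated convergence gives $\int_{\Omega_{\kappa(\eta)}}\cos\beta\,d\beta\wedge ds\to\int_\Gamma\cos\beta\,d\beta\wedge ds$ while $L|\eta|\to L$, yielding the claimed value of $F(\pm1)$. The hard part is the Stokes step and especially the orientation check; it is made tractable by the fact that $\Omega_\kappa$ is a topological disk (because $r(s_0)$ is the minimum critical value of $r$) and that the geodesic's tangent at $(\beta_0,s_0)$ is vertical (because $r'(s_0)=0$).
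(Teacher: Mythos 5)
Your proposal is correct and follows essentially the same route as the paper: reduce to $\eta\in(-1,0)$ by evenness, split $\tau=\int_{\gamma_u}\alpha$ into the $d\theta$-part (evaluated via Clairaut's integral and the winding number) and the $\sin\beta\,ds$-part, show the $(\beta,s)$-projection is a simple counterclockwise parametrization of $\partial\Omega_\kappa$, apply Stokes, and pass to the limit $\kappa\nearrow r(s_0)$ for the boundary values. The only cosmetic difference is that the paper reparametrizes the Reeb flow so that $\dot\theta\equiv 1$ to identify the $d\theta$-integral with $2\pi W(\eta)$, whereas you invoke the definition of $W(\eta)$ directly; your orientation and simplicity checks match the paper's.
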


\begin{proof}
Since $F$ is a continuous even function on $(-1,1)$ and so is the expression on the right hand side of (\ref{integral}), it is enough to prove this identity for $\eta\in (-1,0)$, that is, when the angle $\beta(u)$ which the unit tangent vector $u\in A$ makes with the positive direction of the equator $P_{s_0}$ belongs to $(0,\pi/2)$ (recall that the cosine of this angle is $-\eta$). In this case, the Clairaut integral $K(u)=r(s_0) \cos \beta(u)$ is positive and hence
\[
\kappa = |K(u)| = K(u) = r(s_0) \cos \beta(u).
\]
Therefore, the orbit of $u$ is contained in the region of $T^1 S$ which is given by the image of 
\[
\R/2\pi \Z \times \left(-\frac{\pi}{2},\frac{\pi}{2} \right) \times \left(0,\frac{M}{2} \right) = \R/2\pi \Z \times Q
\]
by the parametrization $\Psi$ which is introduced in (\ref{Psi}). The function $(\theta,\beta,s)\mapsto r(s)/\cos \beta$ is positive on that region and if we multiply the Reeb vector field $R$ by this function we obtain
\begin{equation}
\label{newR}
\widehat{R}(\theta,\beta,s) := \frac{r(s)}{\cos \beta} R (\theta,\beta,s) = \frac{\partial}{\partial \theta} + r'(s) \frac{\partial}{\partial \beta} + r(s) \tan \beta \frac{\partial}{\partial s}.
\end{equation}
Denote by 
\[
(\widehat{\theta},\widehat{\beta},\widehat{s}): \R \rightarrow \R/2\pi \Z \times Q
\]
the orbit of $u=(\theta(u),\beta(u),s_0)\in A$ by the flow of $\widehat{R}$. This orbit is a time reparametrization of the orbit of $u$ by the geodesic flow. The latter orbit hits $A$ again for the first time at time $\tau(u)$; let $T>0$ be the instant corresponding to $\tau(u)$ in the new time reparametrization. From the fact that the time derivative of $\widehat{\theta}$ is constantly equal to 1, we deduce that $T$ coincides with the total variation of the variable $\theta$ along the portion of the orbit of the geodesic flow corresponding to $[0,\tau(u)]$, and by the definition of the winding number $W$ we obtain
\begin{equation}
\label{TT}
T = \widehat{\theta}(T) - \widehat{\theta}(0)=2\pi W(u) = 2 \pi W(\eta).
\end{equation}
The first return time $\tau(u)$ coincides with the integral of the Hilbert contact form on the curve $\phi^t(u)$, $t\in [0,\tau(u)]$. Since the integral of a one-form is independent of the parametrization, we obtain, using the formula (\ref{contact}), 
\begin{equation}
\label{tauint}
\tau(u) = \int_{(\widehat{\theta},\widehat{\beta},\widehat{s})|_{[0,T]}} \alpha = \int_{(\widehat{\theta},\widehat{\beta},\widehat{s})|_{[0,T]}} r(s) \cos \beta\, d\theta +  \int_{(\widehat{\theta},\widehat{\beta},\widehat{s})|_{[0,T]}} \sin \beta \, ds.
\end{equation}
We analyse these two integrals separately. Using the conservation of Clairaut's function and the identity (\ref{TT}), we deduce that
\[
\int_{(\widehat{\theta},\widehat{\beta},\widehat{s})|_{[0,T]}} r(s) \cos \beta\, d\theta = K(u) \int_{\widehat{\theta}|_{[0,T]}} d\theta = 2\pi K(u) W(u).
\]
The integrand in the last integral of (\ref{tauint}) is independent of $\theta$, so this is an integral over the curve $(\widehat{\beta},\widehat{s}):[0,T] \rightarrow \R/2\pi \Z \times (0,M/2)$. By the conservation of Clairaut's function, this curve takes values in $Q$, and more precisely in the level set
\[
K^{-1}(\kappa) = \{(\beta,s)\in Q \mid K(\beta,s)=\kappa\},
\]
where $\kappa=K(u)>0$. The projection of the annulus $A$ to the rectangle $Q$ is given by the open segment consisting of points of the form $(\beta,s)\in Q$ with $\beta\in (0,\pi/2)$ and $s=s_0$. This segment intersects the level set $K^{-1}(\kappa)$, which as we have seen is an embedded circle, in exactly one point, namely $(\beta(u),s_0)$. Since $\tau(u)$ is the first return time to $A$ and since the planar vector field of which $(\widehat{\beta},\widehat{s})$ is an integral curve does not vanish on $K^{-1}(\kappa)$, the curve $(\widehat{\beta},\widehat{s}):[0,T] \rightarrow \R/2\pi \Z \times (0,M/2)$ is closed and is a simple parametrization of the embedded circle $K^{-1}(\kappa)$. The fact that the time derivative of $\widehat{s}$ at 0 is positive implies that this parametrization preserves the counterclockwise orientation of $K^{-1}(\kappa)$ in the $(\beta,s)$-plane. As we have seen, the embedded circle $K^{-1}(\kappa)$ is the boundary of $\Omega_{\kappa}$, so by Stokes theorem we obtain the identity
\[
\int_{(\widehat{\theta},\widehat{\beta},\widehat{s})|_{[0,T]}} \sin \beta \, ds = \int_{(\widehat{\beta},\widehat{s})|_{[0,T]}} \sin \beta \, ds = \int_{\Omega_{\kappa}} \cos \beta \, d\beta\wedge ds.
\]
Therefore, (\ref{tauint}) can be rewritten as
\[
\tau(u) = 2\pi K(u) W(u) + \int_{\Omega_{\kappa}} \cos \beta \, d\beta\wedge ds.
\]
By expressing $\tau$ and $W$ in terms of the generating function $F$ and its derivative $F'=f$ as in Lemmas \ref{genfun} and \ref{winding} and by the identity
\[
K(u) = r(s_0) \cos \beta(u) = - r(s_0) \eta = - \frac{L}{2\pi} \eta,
\]
we find 
\[
F(\eta) - \eta f(\eta) = - L \eta W(\eta) + \int_{\Omega_{\kappa}} \cos \beta \, d\beta\wedge ds = -\eta f(\eta) - L\eta + \int_{\Omega_{\kappa}} \cos \beta \, d\beta\wedge ds,
\]
from which we conclude that
\[
F(\eta) = L |\eta| + \int_{\Omega_{\kappa}} \cos \beta \, d\beta\wedge ds,
\]
for all $\eta\in (-1,0)$, as we wished to prove.
\end{proof}

\begin{rem}
Notice that when $r(s_0)$ is the unique critical value of $r|_{(0,M/2)}$, necessarily its global maximum, the set $\Gamma$ consists of the pairs $(\beta,s)$ with $\beta=0$ and $r(s)=\max r$, and hence $\Gamma$ has empty interior and the integral of the area form $\cos \beta \, d\beta\wedge ds$ on $\Gamma$ vanishes. So in this case the function $F$ takes the value $L$ at $-1$ and $1$. Instead, when $r$ has more critical values then $r(s_0)<\max r=\max K$ and the set $\Gamma$ has a non-empty interior, then the integral of the area form $\cos \beta \, d\beta\wedge ds$ on $\Gamma$ is positive and $F(-1)=F(1)$ is strictly larger than $L$.
\end{rem}

\begin{rem}
We remark that the time reparametrization of the restriction of the Reeb flow to the region where $K>0$ which we considered in the above proof - see (\ref{newR}) - has the following nice properties: The time derivative of the angular component $\theta$ is 1, so $\theta$ is $2\pi$-periodic, while the components $(\beta,s)$ form integral lines of the autonomous planar Hamiltonian vector field $X_K$ on $Q$ which is induced by the Hamiltonian $K:Q \rightarrow \R$ and the symplectic form $\omega_Q:=\cos \beta \, d\beta\wedge ds$, meaning that
\[
\omega_Q (X_K,\cdot) = dK.
\]
Analogous facts hold on the region where $K<0$, when we multiply the Reeb vector field by the positive function $r(s)/|\cos \beta|$. 
\end{rem}

We can now express the contact volume of $T^1 S$ in terms of the generating function $F$ and a suitable integral over the set $\Gamma$:

\begin{lem}
\label{totvol}
Let $F:[-1,1]\rightarrow \R$ be the generating function of the first return map to the Birkhoff annulus of an equator of minimal length $L$, and let $\Gamma$ be the set which is defined in (\ref{Gamma}). Then the contact volume of $T^1 S$ takes the value
\[
\mathrm{vol}(T^1S) = 4 L \int_0^1 F(\eta)\, d\eta - 2 L^2 + \int_{\Gamma} (4\pi r(s) - 2 L \cos \beta)\, d\beta\wedge ds.
\]
\end{lem}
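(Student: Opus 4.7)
The plan is to split $T^1 S$ into the flow saturation $\tilde A := \bigcup_{t\in \R} \phi^t(A)$ of the Birkhoff annulus and its complement, compute the contact volume of each piece separately, and sum. The two pieces are disjoint open sets whose union has full measure in $T^1 S$, so $\mathrm{vol}(T^1 S) = \mathrm{vol}(\tilde A) + \mathrm{vol}(T^1 S \setminus \tilde A)$.

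First I would compute $\mathrm{vol}(\tilde A)$. Since $A$ is transverse to the Reeb flow $\phi^t$ of $\alpha$ and its first return time $\tau$ is well defined by Lemma \ref{welldef}, the standard Fubini-type identity for surfaces of section (which follows from the fact that $\phi^t$ preserves $\alpha\wedge d\alpha$ and that $\alpha\wedge d\alpha$ restricted to the flow box $\{(u,t) : u\in A,\ t\in[0,\tau(u))\}$ equals $\omega\wedge dt$, where $\omega = d\alpha|_A$) gives
\[
\mathrm{vol}(\tilde A) = \int_A \tau\,\omega.
\]
Substituting the expressions for $\tau$ and $\omega$ from Lemma \ref{genfun} and (\ref{omega}), and integrating the $\xi$ variable over $\R/L\Z$, yields
\[
\mathrm{vol}(\tilde A) = L\int_{-1}^1 \bigl(F(\eta) - \eta F'(\eta)\bigr)\, d\eta.
\]
Because $F$ is even, the integrand is even, so this equals $2L \int_0^1 (F(\eta)-\eta F'(\eta))\,d\eta$. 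An integration by parts in the second summand,
\[
\int_0^1 \eta F'(\eta)\,d\eta = F(1) - \int_0^1 F(\eta)\,d\eta,
\]
which is legitimate by the continuous extension of $F$ to $[-1,1]$ established in Lemma \ref{lemma-area}, gives
\[
\mathrm{vol}(\tilde A) = 4L \int_0^1 F(\eta)\,d\eta - 2L F(1).
\]
Now I would use the formula $F(1) = L + \int_{\Gamma} \cos\beta\, d\beta\wedge ds$ from Lemma \ref{lemma-area} to get
\[
\mathrm{vol}(\tilde A) = 4L\int_0^1 F(\eta)\,d\eta - 2L^2 - 2L \int_{\Gamma}\cos\beta\, d\beta\wedge ds.
\]

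Next I would compute the contact volume of $T^1 S \setminus \tilde A$. By Lemma \ref{complemento}, this set is $\{u\in T^1 S : |K(u)|\ge r(s_0)\}$. Using the coordinates $(\theta,\beta,s)$ of (\ref{Psi}), which cover $T^1 S$ away from a set of measure zero (the fibres over the two poles), and recalling that $\alpha\wedge d\alpha = r(s)\,d\theta\wedge d\beta\wedge ds$ from (\ref{volume-form}), this complement corresponds to the pairs $(\beta,s)\in \R/2\pi\Z\times(0,M/2)$ with $r(s)|\cos\beta|\ge r(s_0)$. Splitting according to the sign of $\cos\beta$, this is the disjoint union of $\Gamma$ (where $\beta\in(-\pi/2,\pi/2)$) and its image $\Gamma'$ under the involution $\beta \mapsto \pi-\beta$, which leaves $r(s)$ and the measure $d\beta\wedge ds$ invariant. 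Hence
\[
\mathrm{vol}(T^1 S\setminus \tilde A) = \int_{\R/2\pi\Z}\!\!\int_{\Gamma\cup\Gamma'} r(s)\,d\theta\wedge d\beta\wedge ds = 2\pi\cdot 2\int_\Gamma r(s)\,d\beta\wedge ds = 4\pi \int_\Gamma r(s)\,d\beta\wedge ds.
\]

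Adding the two contributions gives the claimed formula. The technical point that requires care is the integration by parts, for which one must justify that $F$ extends continuously to $[-1,1]$ and that $F'$ is integrable near the endpoints; this is provided by Lemma \ref{lemma-area}, which expresses $F(\eta)-L|\eta|$ as the area of $\Omega_{\kappa(\eta)}$ with respect to a smooth area form, so it varies continuously and monotonically in $|\eta|$ and admits a boundary value at $|\eta|=1$.
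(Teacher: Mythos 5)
Your proposal is correct and follows essentially the same route as the paper: decompose $T^1S$ into the flow saturation $\tilde A$ and its complement, evaluate $\mathrm{vol}(\tilde A)=\int_A\tau\,\omega$ via the generating function and the boundary value $F(\pm 1)=L+\int_\Gamma\cos\beta\,d\beta\wedge ds$ from Lemma \ref{lemma-area}, and evaluate the complement via Lemma \ref{complemento} and the symmetry $\beta\mapsto\pi-\beta$ doubling the contribution of $\Gamma$. The only cosmetic slip is calling the complement of $\tilde A$ open (it is closed), which is immaterial to the volume computation.
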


\begin{proof}
Denote by 
\[
\tilde{A}:=  \bigcup_{t\in \R} \phi^t(A)
\]
the open invariant subset of $T^1 S$ which is generated by the Birkhoff annulus $A$. By a standard argument, the contact volume $\tilde{A}$ equals the integral of $\tau$ on $A$ with respect to the area form $\omega$, see for instance \cite[Lemma 3.5]{abhs18}. By using the expressions for $\tau$ and $\omega$ in the coordinates $(\xi,\eta)\in \R/L\Z \times (-1,1)$ - see (\ref{omega}) and Lemma \ref{genfun} - we obtain from integration by parts and Lemma \ref{lemma-area}
\[
\begin{split}
\mathrm{vol}(\tilde{A}) &= \int_{A} \tau \, \omega = L \int_{-1}^1 \bigl( F(\eta) - \eta F'(\eta) \bigr) \, d\eta \\ &= 2 L \int_{-1}^1 F(\eta) \, d\eta - L \bigl( F(1) + F(-1) \bigr) \\ &= 4 L \int_0^1 F(\eta)\, d\eta - 2 L^2 -2 L \int_{\Gamma} \cos \beta \, d\beta\wedge ds.
\end{split}
\]
By Lemma \ref{complemento}, the contact volume of the complement of $\tilde{A}$ is
\[
\begin{split}
\mathrm{vol} ( T^1 S \setminus \tilde{A} ) &= \mathrm{vol} \bigl( \{ u\in T^1 S \mid |K(u)| \geq r(s_0)\} \bigr) \\ &= 2 \, \mathrm{vol} \bigl( \{ u\in T^1 S \mid K(u) \geq r(s_0)\} \bigr) \\ &= 2 \, \mathrm{vol} ( \R/2\pi \Z \times \Gamma) \\ &= 2 \int_{\R/2\pi \Z \times \Gamma} r(s)\, d\theta\wedge d\beta \wedge ds \\ &= 4\pi \int_{\Gamma} r(s)\, d\beta \wedge ds.
\end{split}
\]
Here we have used the expression (\ref{volume-form}) for the contact volume form. By adding these two identities we get the desired formula for the contact volume of $T^1 S$.
\end{proof}

We conclude this section by characterizing Zoll spheres of revolution in terms of the generating function $F$.

\begin{lem}
\label{carzoll}
The sphere of revolution $S$ is Zoll if and only if the generating function $F$ is constant. In this case, $F$ is constantly equal to $L$, $P_{s_0}$ is the unique 
equator of $S$, and the unique critical point $s_0$ of $r|_{(0,M/2)}$ is a non-degenerate global maximizer.
\end{lem}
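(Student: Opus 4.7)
The strategy is to exploit the formulas $\tau(\xi,\eta) = F(\eta) - \eta F'(\eta)$ and $\varphi(\xi,\eta) = (\xi + F'(\eta), \eta)$ from Lemma \ref{genfun}, the boundary formula $F(\pm 1) = L + \int_\Gamma \cos\beta\, d\beta\wedge ds$ from Lemma \ref{lemma-area}, the normalization $F(0) = M$, and the continuity of the geodesic flow.

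For the forward direction, assume $S$ is Zoll. Then every closed geodesic has a common minimal period $\ell$. The equator $P_{s_0}$ and any meridian are closed of periods $L$ and $M$, so $\ell = L = M$. For $u \in A$, the $L$-periodic orbit $\phi^t(u)$ revisits $A$ at times equally spaced by $\tau(u)$ (because $\tau$ depends only on $\eta$ and $\varphi$ preserves $\eta$), so $L / \tau(u)$ is a positive integer. This quotient is a continuous integer-valued function on the connected set $A$ taking value $1$ at $\eta = 0$ (since the meridian satisfies $\tau = M = L$), hence $\tau \equiv L$ on all of $A$. Differentiating $F(\eta) - \eta F'(\eta) \equiv L$ yields $\eta F''(\eta) \equiv 0$, so $F'' \equiv 0$ by continuity; thus $F$ is affine, and its parity together with $F(0) = L$ forces $F \equiv L$. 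Uniqueness of the equator follows because every equator $P_{s_*}$ is a closed geodesic of length $2\pi r(s_*) = L$, so $r(s_*) = r(s_0)$; a second critical point of $r$ at the same value would, by applying the mean value theorem on the interval between them, force a third critical point with strictly smaller $r$-value, contradicting minimality of $r(s_0)$ among critical values. Non-degeneracy of $s_0$ follows from linearizing the Clairaut Hamiltonian $K$ at $(0, s_0)$, whose Hessian is $\diag(r''(s_0), -r(s_0))$: the requirement that nearby orbits oscillate with the finite period $\tau \equiv L$ forces $r''(s_0) \neq 0$, hence $r''(s_0) < 0$.

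For the reverse direction, assume $F \equiv c$. The normalization $F(0) = M$ forces $c = M$, so $\tau \equiv M$ and $\varphi = \mathrm{id}$; hence $\phi^M = \mathrm{id}$ on $A$ and, by flow-invariance, on the open set $\tilde A$. By continuity of $\phi^M$, this identity extends to $\overline{\tilde A} = \{|K| \leq r(s_0)\}$, so every orbit through a point of $\partial \tilde A = \{|K| = r(s_0)\}$ must be closed. The main obstacle is to rule out the possibility that $s_0$ is a local minimum or a degenerate critical point of $r$. If $r''(s_0) > 0$, then $(0, s_0)$ is a saddle of $K$ in the $(\beta, s)$ plane, and the associated separatrix orbits lie in $\partial \tilde A$ and are asymptotic to $P_{s_0}$ by case (i) of Lemma \ref{gengeo}; being non-periodic, they contradict $\phi^M$-invariance of $\partial \tilde A$. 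The degenerate case $r''(s_0) = 0$ is similarly excluded by oscillation analysis: the period of nearby orbits would diverge as $\eta \to -1$, contradicting $\tau \equiv M$. Hence $s_0$ is a non-degenerate local maximum of $r$, and minimality of $r(s_0)$ among critical values --- via the mean value theorem argument from the forward direction --- forces $s_0$ to be the unique critical point of $r|_{(0, M/2)}$ and its global maximum.

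To conclude the reverse direction, the remark after Lemma \ref{lemma-area} applies: $\Gamma$ is a measure-zero set, so $F(1) = L$; combined with $F \equiv M$ this yields $M = L$ and $F \equiv L$. Moreover, $\max r = r(s_0)$ implies $\{|K| > r(s_0)\} = \emptyset$, so $T^1 S = \overline{\tilde A}$ and every orbit is closed of period $L$ --- orbits in $\tilde A$ by $\phi^L = \mathrm{id}$, and orbits tangent to $P_{s_0}$ parametrize the equator itself. Thus $S$ is Zoll.
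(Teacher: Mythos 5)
Your overall architecture is sound and, in the hardest step, genuinely different from the paper's. The paper proves the forward direction in three lines (periodicity of every point of $A$ forces $F'(\eta)\in L\Z$, hence $F'\equiv 0$ by smoothness and evenness) and derives \emph{all} of the additional claims ($F\equiv L$, uniqueness of the equator, non-degeneracy of $s_0$) from the reverse direction alone, which it settles by linearizing along $P_{s_0}$: the $M$-periodic family $b\mapsto \phi^t(0,b,s_0)$ yields an $M$-periodic Jacobi field $S$ with $\ddot S=(r''(s_0)/r(s_0))S$, $S(0)=0$, $\dot S(0)=1$, and such a solution is bounded only if $r''(s_0)<0$; this disposes of the saddle and the degenerate case simultaneously. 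Your replacement --- separatrices of the Clairaut Hamiltonian for the saddle case, period divergence for the degenerate case --- is a legitimate alternative, and the separatrix argument is complete as stated.

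However, two steps are genuine gaps as written. First, the claim that the degenerate case $r''(s_0)=0$ is ``excluded by oscillation analysis: the period of nearby orbits would diverge'' is precisely the point that needs proof. It is true: the first return time equals the time for the $(\beta,s)$-projection to traverse the Clairaut level circle $K^{-1}(\kappa)$ once, and an estimate of $\int ds/\sin\beta$ near $(0,s_0)$ shows this diverges like a negative power of $r(s_0)-\kappa$ when the critical point is degenerate. But you assert it rather than prove it, and it is the crux of the reverse direction. Second, your uniqueness argument in the forward direction fails in one case: if $s_0$ and $s_*$ are critical points with $r(s_0)=r(s_*)$ and $r\geq r(s_0)$ on the interval between them (in particular if $r$ is constant there), the mean value theorem produces no critical point with \emph{strictly smaller} value, so there is no contradiction with minimality of $r(s_0)$ among critical values. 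This is repairable --- once non-degeneracy of $s_0$ is known, $r$ dips strictly below $r(s_0)$ adjacent to $s_0$ and the interior minimum gives the contradiction --- but you invoke the mean value theorem argument before establishing non-degeneracy, and your forward-direction non-degeneracy argument (``finite period forces $r''(s_0)\neq 0$'') is again asserted rather than proved. The cleanest repair is the paper's: prove only that $F$ is constant in the forward direction and obtain every additional claim from the reverse implication.
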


\begin{proof}
First assume that $S$ is Zoll. Then all elements $(\xi,\eta)\in A$ are periodic points of $\varphi$. By the form of $\varphi$ given in Lemma \ref{genfun}, $(\xi,\eta)$ is a periodic point of $\varphi$ if and only if $F'(\eta) \in L \Z$. By smoothness, $F'$ is then forced to be a constant integer multiple of $L$. The fact that $F$ is even forces $F'$ to be zero and $F$ to be constant.

Now assume that $F$ is constant. Then $F(\eta)=M$ for every $\eta\in [-1,1]$, where $M=F(0)$ is the length of meridians, and from Lemma \ref{genfun} we deduce that $\varphi=\mathrm{id}$ and $\tau=M$ on $A$. For $b\in [0,\pi]$ we denote the orbit of the the geodesic flow starting at the unit vector $u_b$ with $\theta(u_b)=0$, $\beta(u_b)=b$ and $s(u_b)=s_0$ by
\[
\bigl( \theta_b(t), \beta_b(t), s_b(t) \bigr) = \phi^t(0,b,s_0).
\]
The fact that $\varphi$ is the identity and $\tau$ is constantly equal to $M$ on $A$ implies that these orbits are $M$-periodic for $b\in (0,\pi)$, and hence also for $b=0$ and $b=\pi$ (since the latter orbits are $L$-periodic, this forces $M$ to be a multiple of $L$). In particular the function 
\[
S(t):= \frac{\partial}{\partial b} s_b(t)\Big|_{b=0}
\]
is $M$-periodic. Together with the functions
\[
\Theta(t) :=  \frac{\partial}{\partial b} \theta_b(t)\Big|_{b=0} \qquad \mbox{and} \qquad  B(t) := \frac{\partial}{\partial b} \beta_b(t)\Big|_{b=0},
\]
the function $S$ gives us the solution $(\Theta,B,S)$ of the linearization of the system (\ref{geoeq1}-\ref{geoeq3}-\ref{geoeq2}) along the equator $P_0$, namely
\[
\dot{\Theta} (t) = 0, \qquad \dot{B}(t) = \frac{r''(s_0)}{r(s_0)} S(t), \qquad \dot{S}(t) = B(t),
\]
with initial conditions
\[
\Theta(0)=0, \qquad B(0)=1, \qquad S(0)=0.
\]
By the last two equations, the function $S$ satisfies the second order linear Cauchy problem
\begin{equation}
\label{eqS}
\ddot{S}(t) = \frac{r''(s_0)}{r(s_0)} S(t), \qquad S(0)=0, \qquad \dot{S}(0)=1.
\end{equation}
The solution of the above system is $M$-periodic only if $r''(s_0)$ is strictly negative. Therefore, $s_0$ is a non-degenerate local maximizer of the function $r$. Since $r(s_0)$ is the minimal critical value of $r|_{(0,M/2)}$, this implies that $s_0$ is the unique critical point of $r|_{(0,M/2)}$ and the global maximizer: $r(s)< r(s_0)$ for all $s\in (0,M)\setminus \{s_0\}$. We deduce that the set $\Gamma$ which is defined in (\ref{Gamma}) consists of the singleton $\{(0,s_0)\}$. Therefore, Lemma \ref{lemma-area} implies that $F(-1)=F(1)=L$, and hence $M=L$ and $F$ is constantly equal to $L$. Thus, all the the orbits of the geodesic flow which meet $A$ are closed and have period $L$, so they correspond to closed geodesics of length $L$.  By Lemma \ref{complemento}, all the orbits of the geodesic flow which do not meet $A$ belong to the set
\[
\{u\in T^1 S \mid |K(u)| \geq r(s_0)\} = \R/2\pi \Z \times \{(0,s_0),(\pi,s_0)\},
\]
where on the right we are using the standard coordinates $(\theta,\beta,s)$.
This set consists precisely in the orbits parametrizing the equators $P_{s_0}$ in both directions and with arbitrary starting point. Since also $P_{s_0}$ is a closed geodesic with length $L$, we deduce that $S$ is Zoll, as claimed. The other statements about the value of $F$ and the nature of the critical point $s_0$ have been proved along the way.
\end{proof}

\begin{rem}
Notice that equation (\ref{eqS}) is the equation for orthogonal Jacobi vector fields along $P_{s_0}$ and the number $r''(s_0)/r(s_0)$ appearing in it is minus the Gauss curvarure of $S$ along this equator. By studying the monotonicity regions of the functions $s_b$ it is easy to prove that $S$ has minimal period $L$ and $r''(s_0)=-1/r(s_0)$. So we recover the well known fact that each Zoll sphere of revolution has a unique equator and that along this equator the Gauss curvature coincides with that of the round sphere having the same equator.
\end{rem}

\section{Proof of Theorem \ref{mainthm1}}

We are now ready to prove Theorem \ref{mainthm1} from the introduction:

\setcounter{mainthm}{0}

\begin{mainthm}
The systolic ratio of a sphere of revolution $S$ does not exceed $\pi$. It equals $\pi$ if and only if $S$ is Zoll.
\end{mainthm}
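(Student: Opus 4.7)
The bulk of the work has already been carried out in Sections 1--4; my plan is to follow the sketch given in the introduction and assemble the pieces. Since Zoll spheres of revolution satisfy $\rho_{\mathrm{sys}}=\pi$, it suffices to show that if $S$ is not Zoll then $\rho_{\mathrm{sys}}(S)<\pi$. Let $P_{s_0}$ be an equator of minimal length $L=2\pi r(s_0)$ and let $F:[-1,1]\to\R$ be the even generating function produced by Lemma \ref{genfun} and extended continuously by Lemma \ref{lemma-area}; by Lemma \ref{carzoll}, $F$ is not constant. If $L^2<\pi\,\mathrm{area}(S)$ then $P_{s_0}$ itself already witnesses $\rho_{\mathrm{sys}}(S)<\pi$, so I may assume
\[
L^2\ \geq\ \pi\,\mathrm{area}(S)\ =\ \tfrac{1}{2}\mathrm{vol}(T^1S).
\]

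Next, I will use Lemma \ref{totvol} to convert this hypothesis into a constraint on the mean of $F$. On the set $\Gamma$ one has $\cos\beta>0$ and $r(s)\cos\beta=K(\beta,s)\geq r(s_0)=L/(2\pi)$, so $2\pi r(s)\geq L/\cos\beta\geq L\cos\beta$, i.e.\ $4\pi r(s)-2L\cos\beta\geq 0$ pointwise on $\Gamma$. Lemma \ref{totvol} therefore yields
\[
\mathrm{vol}(T^1S)\ \geq\ 4L\int_0^1 F(\eta)\,d\eta-2L^2,
\]
and combined with $\mathrm{vol}(T^1S)\leq 2L^2$ this forces $\int_0^1 F\leq L$. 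Since Lemma \ref{lemma-area} also gives $F(\pm 1)\geq L$ (the integrand $\cos\beta$ is nonnegative on $\Gamma\subset Q$), and $F$ is continuous, even, and non-constant, the minimum $\mu$ of $F$ over $[-1,1]$ must be strictly less than $L$ and must be attained at some interior point $\eta_0\in(-1,1)$. Thus $F'(\eta_0)=0$, so by Lemma \ref{genfun} the entire circle $\{(\xi,\eta_0):\xi\in\R/L\Z\}\subset A$ consists of fixed points of $\varphi$, producing a closed geodesic $\gamma$ of length $\tau(\xi,\eta_0)=F(\eta_0)-\eta_0 F'(\eta_0)=\mu$.

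To finish, I plan to upgrade the integral inequality to the strict bound $\mathrm{vol}(T^1S)>2\mu^2$, from which $\rho_{\mathrm{sys}}(S)\leq\ell(\gamma)^2/\mathrm{area}(S)<\pi$ follows immediately. Splitting $[0,1]=[0,\mu/L]\cup[\mu/L,1]$ and using $F\geq\mu$ on the first subinterval together with the strict lower bound $F(\eta)>L\eta$ from Lemma \ref{stima} on the second (and continuity), I obtain
\[
\int_0^1 F(\eta)\,d\eta\ >\ \frac{\mu^2}{L}+\int_{\mu/L}^1 L\eta\,d\eta\ =\ \frac{L}{2}+\frac{\mu^2}{2L},
\]
and feeding this back into the volume inequality yields $\mathrm{vol}(T^1S)>4L(L/2+\mu^2/(2L))-2L^2=2\mu^2$, i.e.\ $\ell(\gamma)^2=\mu^2<\pi\,\mathrm{area}(S)$. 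The main obstacle in the whole argument is conceptual rather than technical: one has to recognize that the sphere's geometric content can be repackaged as a one-variable optimization for $F$, with the contributions from the invariant region $T^1S\setminus\tilde{A}$ not obstructing the inequality (the nonnegativity $4\pi r\geq 2L\cos\beta$ on $\Gamma$). All the real work in establishing this repackaging has already been done in Lemmas \ref{welldef}--\ref{carzoll}; what remains is the short calculus computation above.
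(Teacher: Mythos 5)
Your proposal is correct and follows essentially the same route as the paper's proof: the volume identity of Lemma \ref{totvol} with the nonnegativity of the integrand on $\Gamma$, the reduction to $\int_0^1 F\leq L$, the interior minimum $\mu<L$ of the non-constant even function $F$ giving a closed geodesic, and the strict estimate $\int_0^1 F>\int_0^1\max\{\mu,L\eta\}\,d\eta$ yielding $\mathrm{vol}(T^1S)>2\mu^2$. The only cosmetic differences are that you obtain $F(\pm1)\geq L$ from Lemma \ref{lemma-area} rather than from Lemma \ref{stima}, and you justify the strictness of the final integral inequality via the pointwise strict bound $F>L\eta$ on $(\mu/L,1)$ rather than via differentiability of $F$ at $\mu/L$; both are valid.
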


\begin{proof}
Since Zoll spheres have systolic ratio $\pi$, we must prove that if a sphere of revolution $S$ is not Zoll than $\rho_{\mathrm{sys}}(S)<\pi$. Thanks to the first equality in the identity (\ref{vol-area}) we can work with the contact volume of $T^1S$ instead of the Riemannian area of $S$. Therefore, we have to prove the following statement: If a sphere of revolution $S$ is not Zoll, then it admits a closed geodesic $\gamma$ whose length $\ell(\gamma)$ satisfies
\begin{equation}
\label{tesi}
\ell(\gamma)^2 < \frac{1}{2} \mathrm{vol}(T^1 S).
\end{equation}

We represent $S$ as in Section \ref{sphererev} by the embedded curve $\sigma=(r,z):\R/M \Z\rightarrow \R^2$, where $M$ denotes the length of the meridians. Let $s_0\in (0,M/2)$ be a critical point of $r$ such that $r(s_0)$ is the minimum over all critical values of $r$ on $(0,M/2)$.   Let $A$ be the open Birkhoff annulus corresponding to the positively oriented equator $P_{s_0}$ and let $\varphi: A \rightarrow A$ and $\tau: A \rightarrow \R$ be the corresponding first return map and time function. The length of the equator $P_{s_0}$ is denoted by $L=2\pi r(s_0)$. We denote by $F:(-1,1) \rightarrow \R$ the generating function of $\varphi$ as in Lemma \ref{genfun} which, as we have seen in Lemma \ref{lemma-area}, extends continuously to $[-1,1]$.

By Lemma \ref{totvol} the contact volume of $T^1 S$ is given by the formula
\[
\mathrm{vol}(T^1S)= 4 L \int_0^1 F(\eta)\, d\eta - 2 L^2 + \int_{\Gamma} (4\pi r(s) - 2 L \cos \beta)\, d\beta\wedge ds,
\]
where $\Gamma\subset Q=(-\pi/2,\pi/2) \times (0,M/2)$ is the compact set
\[
\Gamma = \{ (\beta,s)\in Q \mid K(\beta,s) \geq r(s_0) \},
\]
$K(\beta,s)=r(s) \cos \beta$ denoting the Clairaut integral. On the set $\Gamma$, the function $r$ is certainly not smaller than $r(s_0)$, and hence the integrand of the latter integral is non-negative:
\[
4\pi r(s) - 2 L \cos \beta \geq 4 \pi r(s_0) - 2L \cos \beta = 2L  - 2L \cos \beta \geq 0 \qquad \mbox{on } \Gamma.
\]
Therefore, we have the inequality
\begin{equation}
\label{disvol}
\mathrm{vol}(T^1 S) \geq 4 L \int_0^1 F(\eta)\, d\eta - 2 L^2.
\end{equation}

Since the equator $P_{s_0}$ is a closed geodesic of length $L$, clearly $\ell_{\min}(S)\leq L$ and we may assume that
\[
L^2 \geq \frac{1}{2} \mathrm{vol} (T^1 S),
\]
because otherwise (\ref{tesi}) holds trivially. By the inequality (\ref{disvol}), this is equivalent to
\[
\int_0^1 F(\eta)\, d \eta \leq L.
\]
Since we are assuming that $S$ is not Zoll, the function $F$ is not constant, because of Lemma \ref{carzoll}.
Using also the fact that $F$ is a positive continuous function on $[-1,1]$ with $F(1)\geq L$ (see Lemma \ref{stima}), we deduce that the minimum $\mu$ of $F$ on $[0,1]$ is achieved in $[0,1)$ and belongs to the interval $(0,L)$. Since $F$ is an even function, $\mu$ is also the minimum of $F$ on $(-1,1)$, and hence it is a critical value of $F|_{(-1,1)}$. By the formulas for $\varphi$ and $\tau$ from Lemma \ref{genfun}, the surface $S$ has a closed geodesic $\gamma$ of length $\mu$. 

By Lemma \ref{stima}, we have
\[
F(\eta) \geq \max\{ \mu, L \eta\} \qquad \forall \eta\in [0,1].
\]
Since $F$ is differentiable at $\mu/L \in (0,1)$, the above inequality must be strict for $\eta=\mu/L$ and hence
\[
\int_0^1 F(\eta)\, d\eta > \int_0^1\max\{ \mu, L \eta\} \, d \eta = \mu + \frac{1}{2} \frac{(L-\mu)^2}{L}.
\]
By (\ref{disvol}) we have then
\[
\mathrm{vol}(T^1 S)\geq  4 L \int_0^1 F(\eta)\, d\eta - 2L^2 > 4 L \mu + 2(L-\mu)^2 - 2 L^2 = 2\mu^2.
\]
Therefore, $S$ has a closed geodesic $\gamma$ satysfying
\[
\ell(\gamma)^2 < \frac{1}{2} \mathrm{vol}(T^1 S),
\]
as we wished to prove.
\end{proof}

\section{Zermelo navigation data on surfaces of revolution}

In the following, we consider Finsler geodesic flows associated to a surface of revolution $S \subset \R^3$ and a rotational invariant killing vector field on $S$. We continue using the notation from  previous sections. In particular, $\Phi$ denotes the diffeomorphism onto $S\setminus \{p_S,p_N\}$ from Section \ref{sphererev}.

Given a real number $a$, let $W_a$ be the smooth vector field on $S$ given by 
\[
W_a  = a \left( x \frac{\partial}{\partial y} - y \frac{\partial}{\partial x} \right)\Big|_S,
\]  
which in the coordinates $(\theta,s)$ induced by $\Phi$ takes the form
\[
W_a  = a \, \frac{\partial}{\partial \theta}.
\]
We assume that $a$ satisfies
\begin{equation}\label{cond_Finsler}
|a| < \frac{1}{r_{\max}},
\end{equation}
where
\[
r_{\rm max}:=\max \{r(s)|s\in [0,M/2]\}
\]
denotes the maximal distance of a point of $S$ from the symmetry axis.  The triple 
\[
\left(S,\left< \cdot , \cdot \right>_{\R^3}|_S,W_a \right),
\]
which we denote simply by $S_a$, is called a Zermelo navigation data on $S$, and $W_a$ is referred to as the wind. The Zermelo navigation data gives rise to a Finsler metric $G_a$ on $S$ as follows. For each $p\in S$, consider the circle $T^1_pS_a\subset T_pS$ which is obtained by shifting the circle $T^1_pS$ by $W_a$:
\[
T_p^1 S_a := W_a(p) + T_p^1 S.
\]
Condition \eqref{cond_Finsler} implies that $0\in T_p^1S$ belongs to the bounded component of $T_p^1S \setminus T^1_pS_a$. In particular, the circles $\{T_p^1S_a \mid p\in S\}$ are the unit spheres of a Finsler metric 
\[
G_a: TS \rightarrow [0,+\infty).
\]
In other words, $G_a$ is the fiberwise positively homogeneous function taking the value 1 on each $T_p^1 S_a$.

\begin{rem}
It is possible to compute $G_a$ explicitly. If $\|\cdot\|$ denotes the norm induced by $\left<\cdot,\cdot\right>$ then in each tangent space $T_pS$ the function $u\mapsto \|u-G_a(u)W_a\|$ is $1$-homogeneous and realizes $T_p^1 S_a$ as its level one. By uniqueness $G_a(u) = \|u-G_a(u)W_a\|$ holds for every $u$. If we write $u=A\frac{\partial}{\partial \theta}+B\frac{\partial}{\partial s}$ then the above identity reads $\sqrt{r^2(A-G_a(u)a)^2+B^2}=G_a(u)$. Raising to the square and expanding we end up with a degree two equation for $G_a(u)$: 
\[
(1-r^2a^2)G_a^2(u) + 2r^2aAG_a(u) - (r^2A^2+B^2) = 0.
\]
Solving we get
\begin{equation}\label{explicit_G_a}
G_a\left(A\frac{\partial}{\partial \theta}+B\frac{\partial}{\partial s}\right) = \frac{\sqrt{r^2A^2+(1-r^2a^2)B^2}-r^2aA}{1-r^2a^2}.
\end{equation}
\end{rem}

The unit sphere bundle 
\[
T^1S_a = \bigcup_{p \in S} T_p^1S_a
\]
minus the two unit circles based at the north and south pole is now the image of the diffeomorphism
\[
\begin{split}
\Psi_a: \R/2\pi \Z  \times \R/2\pi \Z \times (0,M/2) \rightarrow  T^1 S \setminus ( T^1_{p_S} S \cup T^1_{p_N} S ), \\
\Psi_a(\theta,\beta,s) = \left( \Phi(\theta,s), \Bigl( a + \frac{1}{r} \cos \beta\Bigr) \frac{\partial \Phi}{\partial \theta} (\theta,s) + \sin \beta \frac{\partial \Phi}{\partial s} (\theta,s) \right),
\end{split}
\]
which we will use as standard coordinate system in our computations. Observe that $T^1_{p_S}S_a = T^1_{p_S}S$ and $T^1_{p_N}S_a = T^1_{p_N}S$.

The geodesic flow of $G_a$ is the Reeb flow of the Hilbert contact form $\alpha_a$ on $T^1S_a$. This contact form is defined by pulling back the canonical Liouville form of the cotangent bundle of $S$ by the Legendre transform $TS \rightarrow T^*S$ which is induced by the function $G_a^2/2$ and by restricting the resulting one-form to $T^1S_a$. Equivalently, the form $\alpha_a$ at a point $v\in T^1 S_a$ is given by
\[
g_v(v,d\pi(v)[ \cdot]),
\]
where $g_v$ denotes the second fiberwise differential of the function $G_a^2/2$ and $\pi: TS_a \rightarrow S_a$ is the footpoint projection. In the coordinates $(\theta,\beta,s)$ the contact form $\alpha_a$ has the expression
\begin{equation}\label{equation_contact_form_Zermelo}
\alpha_a = \frac{1}{1+ar(s)\cos \beta}( r(s) \cos \beta \,d\theta+\sin \beta \,ds).
\end{equation}
This can be checked as follows. The tangent bundle over the complement of the poles admits natural coordinates $(\theta,s,A,B) \simeq A\frac{\partial}{\partial \theta}+B\frac{\partial}{\partial s}$ induced by the coordinates $(\theta,s)$. If $(\theta,s,p_\theta,p_s) \simeq p_\theta \ d\theta \ + \ p_s \ ds$ denote the induced natural coordinates on the cotangent bundle over the complement of the poles, then the Legendre transform induced by $\frac{1}{2}G_a^2$ reads $p_\theta=G_a\frac{\partial G_a}{\partial A}$, $p_s=G_a\frac{\partial G_a}{\partial B}$. Using this and formula~\eqref{explicit_G_a}, the pull-back of the tautological $1$-form by Legendre transform yields $$ \frac{G_a}{1-r^2a^2} \left( \left( \frac{r^2A}{\sqrt{r^2A^2+(1-r^2a^2)B^2}} - r^2a \right) d\theta + \frac{(1-r^2a^2)B}{\sqrt{r^2A^2+(1-r^2a^2)B^2}} ds \right). $$ On the level $\{G_a=1\}$ we have, by~\eqref{explicit_G_a}, that $\sqrt{r^2A^2+(1-r^2a^2)B^2} = 1+r^2a(A-a)$. Plugging this together with $G_a=1$ above we obtain that this $1$-form agrees with
\[
\begin{aligned}
&\frac{1}{1-r^2a^2} \left( \frac{r^2A}{1+r^2a(A-a)} - r^2a \right) d\theta + \frac{B}{1+r^2a(A-a)} ds = \frac{r^2(A-a)d\theta+Bds}{1+r^2a(A-a)}
\end{aligned}
\]
on vectors tangent to the level $G_a^{-1}(1)$. In the coordinates induced by the diffeomorphism $\Psi_a$ we have  $A=a+\frac{1}{r}\cos\beta$, $B=\sin\beta$, or equivalently $r^2(A-a)=r\cos\beta$, $B=\sin\beta$. Plugging above we finally get the desired form of $\alpha_a$~\eqref{equation_contact_form_Zermelo} on $G_a^{-1}(1)$.

%This can be checked using the fact that the contact structure $\ker \alpha_a$ is spanned by the fiber direction $\partial/\partial \beta$ and the horizontal lift of 
%\[
%- \frac{\sin \beta}{r(s)}\frac{\partial}{\partial \theta} + \cos \beta \frac{\partial}{\partial s},
%\] 
%together with the fact that $\alpha_a(v_\mathcal{H})=g_v(v,v)=1$ for all $v\in T^1 S_a$, where $v_\mathcal{H}\in T_v T^1S_a$ is the horizontal lift of $v$. 

A direct computation shows that 
\begin{equation}\label{derivaalphaa}
\begin{split}
d\alpha_a = \frac{1}{(1+ar(s)\cos \beta)^2}\Bigl( & r(s)\sin \beta \, d\theta \wedge d\beta + (\cos \beta+ar(s))\, d\beta \wedge ds \\ & + r'(s)\cos \beta \, ds \wedge d\theta  \Bigr),
\end{split}
\end{equation}
and
\begin{equation}\label{volumeform}
\alpha_a \wedge d\alpha_a = \frac{r(s)}{(1+ar(s)\cos \beta)^2} \, d\theta \wedge d\beta \wedge ds.
\end{equation}
The Reeb vector field of $\alpha_a$ is
\[
R_a(\theta,\beta,s) = \frac{1}{r(s)}\Bigr( (\cos \beta +ar(s))\, \frac{\partial}{\partial \theta}  + r'(s)\cos \beta\, \frac{\partial}{\partial \beta}  + r(s)\sin \beta \, \frac{\partial}{\partial s}\Bigr),
\]
and thus its Reeb flow, corresponding to the geodesic flow of $G_a$, is determined by the system
\begin{equation}\label{sistema}
\left\{\begin{aligned}
  \dot \theta  = &  \frac{\cos \beta}{r(s)} + a\\
 \dot \beta  = & \frac{r'(s)\cos \beta}{r(s)}\\ 
  \dot s  =  & \sin \beta.
\end{aligned}\right.
\end{equation}
As in the Riemannian case ($a=0$), the Reeb flow preserves Clairault's integral 
\[
K(\beta,s)=r(s) \cos \beta.
\] 
In fact, the effect of the wind $W_a$ is only apparent in the first equation of \eqref{sistema}.

\section{The generating function in the Finsler case} 

Throughout this section, we assume without loss of generality that $a\geq 0$. Indeed, the geodesic flow on $S_a$ for $a<0$ is conjugate to the geodesic flow on $S_{-a}$.

As before we consider the Birkhoff annulus $A \subset T^1S_a$ associated to an equator $P_0 \subset S$ with least radius 
\begin{equation}\label{leastradius}
r_{\min} := \min \{r(s) \mid s\in (0,M/2), \; r'(s)=0\}.
\end{equation} 
This open annulus $A$ has natural coordinates $(\theta,\beta)\in \R/ 2\pi \Z \times (0,\pi)$. Observe that \eqref{cond_Finsler} implies that 
\[
0<a r_{\min}\leq a r_{\rm max} <1.
\]
We denote by $L=2 \pi r_{\min}$ the euclidean length of $P_0$. We shall systematically use the alternative coordinates $\xi=r_{\min} \theta\in \R/L \Z$ and $\eta=-\cos \beta\in (-1,1)$ on $A$.

Since the Reeb flow projected to the plane $(\beta,s)$ is independent of $a$, any Reeb trajectory of $\alpha_a$ starting at $A$ must return to $A$. This is proved in Lemma \ref{welldef} in the case $a=0$ and hence it must hold for any $a$. By the same reasoning, the first return time 
\[
\tau_a:A \to (0,+\infty),
\]
is independent of $a$. Hence we simply denote it by $\tau$. The rotational invariance of $G_a$ implies that $\tau$ does not depend on $\theta$.   

Let $\varphi_a: A \to A$ be the first return map.  Equations \eqref{sistema} and Clairault's integral give
\[
\varphi_a(\xi,\eta) = (\xi + f_a(\eta),\eta) ,
\]
for some smooth function $f_a:(-1,1)\to \R$ satisfying
\[
f_a = f +ar_{\min} \tau,
\]
where $f=f_0$ corresponds to the first return map in the Riemannian case $a=0$. 

Let $F:(-1,1)\rightarrow \R$ be the primitive of $f$ satisfying
\[
F(0)=\tau(0),
\]
and let $T:(-1,1)\rightarrow \R$ be the primitive of $\tau$ satisfying
\[
T(0)=0.
\]
Then
\begin{equation}\label{condFa2}
F_a:(-1,1)\to \R, \qquad F_a := F + a r_{\min} T,
\end{equation}
is the primitive of $f_a$ such that
\begin{equation}\label{condFa}
F_a(0)=\tau(0).
\end{equation}
Using the formula 
\[
\tau(\eta)=F(\eta)-\eta F'(\eta) \qquad \forall \eta \in (-1,1), 
\]
we  integrate $\tau$ by parts to obtain 
\begin{equation}\label{expT}
T(\eta) = \int_0^\eta \tau(\zeta)d\zeta =  \int_0^\eta \bigl( F(\zeta)-\zeta F'(\zeta) \bigr)\, d\zeta = 2\int_0^\eta F(\zeta) \, d\zeta -\eta F(\eta)\quad  \forall \eta \in (-1,1).
\end{equation}
Notice that the function $F$ is even, but the function $T$, and hence the function $F_a$, are not.
 Since $F$  extends continuously to $[-1,1]$, see Lemma \ref{lemma-area}, we conclude from \eqref{condFa2} and the expression above that the same is true for $F_a$.  By \eqref{condFa2}, \eqref{expT} and the evenness of $F$ we have
\begin{eqnarray}
\label{Fa-1} F_a(-1) & =& (1+ar_{\min})F(-1) -2 ar_{\min} \displaystyle{\int_0^1} F(\eta)d\eta, \\
\label{Fa1}F_a(1) & =& (1-ar_{\min})F(1) +2ar_{\min} \displaystyle{\int_0^1} F(\eta)d\eta.
\end{eqnarray}
Since
$T' = \tau >0$ we conclude from $T(0)=0$ and \eqref{condFa2}  that 
\begin{equation}\label{Fadesig}
\begin{aligned}
F_a & <F \ \ \  \mbox{ on } [-1,0),\\
F_a & > F \ \ \ \mbox{ on } (0,1].
\end{aligned}
\end{equation}
A critical point of $F_a$ corresponds to a fixed point of $\varphi_a$. The following result provides conditions for the existence of a fixed point of $\varphi_a$ with low first return time.

\begin{lem}\label{prop1}
If
\begin{equation}\label{intF}
\int_0^1 F(\eta)d\eta \leq \frac{L}{2}\left(1+ \frac{1}{(1+ar_{\min})^2} \right),
\end{equation} then $F$ admits a minimum point $\bar \eta\in (-1,0]$ satisfying 
\begin{equation}\label{critF}
F(\bar \eta) < \frac{L}{1+ar_{\min}},
\end{equation}
and $F_a$ admits a critical point $\hat \eta \in (-1,\bar \eta)$ satisfying
\begin{equation}\label{estimatau}
\tau(\hat \eta) < \tau(\bar \eta) = F(\bar \eta).
\end{equation}
\end{lem}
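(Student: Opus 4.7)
The plan is threefold: (i) establish the claim about $\bar\eta$ by a direct integral comparison built on Lemma~\ref{stima}; (ii) translate the desired inequality $\tau(\hat\eta)<\mu$ at critical points of $F_a$ into a pointwise condition on $F$; and (iii) locate a critical point of $F_a$ inside the region where this pointwise condition is automatic, by a minimization argument on a well-chosen subinterval.

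For (i), I will argue by contradiction. Suppose $F(\eta)\geq L/(1+ar_{\min})$ for every $\eta\in[0,1]$. Combined with Lemma~\ref{stima}, which gives $F(\eta)>L\eta$ on $(0,1)$, this yields $F(\eta)\geq\max\{L/(1+ar_{\min}),L\eta\}$ everywhere on $[0,1]$, with strict inequality on the subinterval $(1/(1+ar_{\min}),1)$. A direct computation shows that the integral of the max function equals exactly $\frac{L}{2}(1+1/(1+ar_{\min})^2)$, so the strict inequality on a set of positive measure upgrades to $\int_0^1 F(\eta)\,d\eta > \frac{L}{2}(1+1/(1+ar_{\min})^2)$, contradicting hypothesis~\eqref{intF}. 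Hence the minimum $\mu$ of $F$ on $[0,1]$ is strictly less than $L/(1+ar_{\min})$, and since $F(1)\geq L$ it is attained at some $\bar\eta^+\in[0,1)$; by evenness of $F$ the point $\bar\eta:=-\bar\eta^+\in(-1,0]$ satisfies $F(\bar\eta)=\mu$ and $F'(\bar\eta)=0$, so $\tau(\bar\eta)=F(\bar\eta)-\bar\eta F'(\bar\eta)=\mu$.

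Write $c:=ar_{\min}$ for brevity. Using the form $F_a'(\eta)=cF(\eta)+(1-c\eta)F'(\eta)$, at any critical point $\hat\eta$ of $F_a$ one has $F'(\hat\eta)=-cF(\hat\eta)/(1-c\hat\eta)$, which substituted into $\tau(\hat\eta)=F(\hat\eta)-\hat\eta F'(\hat\eta)$ produces the key identity
\[
\tau(\hat\eta)=\frac{F(\hat\eta)}{1-c\hat\eta}.
\]
In particular, $\tau(\hat\eta)<\mu$ is equivalent to $F(\hat\eta)<(1-c\hat\eta)\mu$. I therefore introduce $\psi(\eta):=F(\eta)-(1-c\eta)\mu$ on $[-1,\bar\eta]$. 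One has $\psi(-1)\geq L-(1+c)\mu>0$ because $F(-1)\geq L$ and $\mu<L/(1+c)$, while $\psi(\bar\eta)=c\bar\eta\mu\leq0$; in the boundary case $\bar\eta=0$ the derivative $\psi'(0)=F'(0)+c\mu=c\mu>0$ implies $\psi<0$ on an interval immediately to the left of $0$. Letting $\eta_0$ be the largest zero of $\psi$ in $[-1,\bar\eta)$, I obtain $\eta_0\in(-1,\bar\eta)$, $F(\eta_0)=(1-c\eta_0)\mu$ and $F(\eta)<(1-c\eta)\mu$ strictly on $(\eta_0,\bar\eta)$.

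It remains to produce a critical point of $F_a$ in $(\eta_0,\bar\eta)$. Using the closed form $F_a(\eta)=(1-c\eta)F(\eta)+2c\int_0^\eta F(\zeta)\,d\zeta$ together with $F(\eta_0)=(1-c\eta_0)\mu$ and the strict inequality $F<(1-c\eta)\mu$ on $(\eta_0,\bar\eta)$, a straightforward but crucial computation yields
\[
F_a(\eta_0)-F_a(\bar\eta)>c\mu\,|\bar\eta|\,(1+c|\bar\eta|)\geq 0.
\]
On the other hand $F_a'(\bar\eta)=c\mu>0$, so $F_a<F_a(\bar\eta)$ immediately to the left of $\bar\eta$. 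Consequently the minimum of $F_a$ over the compact interval $[\eta_0,\bar\eta]$ lies strictly below both $F_a(\bar\eta)$ and $F_a(\eta_0)$, and is attained at an interior point $\hat\eta\in(\eta_0,\bar\eta)\subset(-1,\bar\eta)$ with $F_a'(\hat\eta)=0$. Since $\hat\eta\in(\eta_0,\bar\eta)$, we have $F(\hat\eta)<(1-c\hat\eta)\mu$, and the key identity then gives $\tau(\hat\eta)=F(\hat\eta)/(1-c\hat\eta)<\mu=F(\bar\eta)$, as required. The principal difficulty I anticipate is identifying the correct linear comparison $y=(1-c\eta)\mu$: its specific slope $-c\mu$ is precisely what allows the integral estimate to collapse to the explicit positive remainder $c\mu|\bar\eta|(1+c|\bar\eta|)$, without which the final bound $F_a(\eta_0)>F_a(\bar\eta)$ would fail.
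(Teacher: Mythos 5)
Your proof is correct, and while its first half and its central identity coincide with the paper's, the way you localize $\hat\eta$ is genuinely different. The contradiction argument producing $\bar\eta$ with \eqref{critF} via the integral of $\max\{L/(1+ar_{\min}),L\eta\}$ is exactly the paper's, and both proofs rest on the same identity $\tau(\hat\eta)=F(\hat\eta)/(1-ar_{\min}\hat\eta)$ at critical points of $F_a$ together with $F_a'(\bar\eta)=ar_{\min}\mu>0$. The paper, however, anchors at the endpoint $\eta=-1$: it invokes the hypothesis \eqref{intF} a second time through the boundary formula \eqref{Fa-1} to get $F_a(-1)\geq L\bigl(1-ar_{\min}/(1+ar_{\min})^2\bigr)>L/(1+ar_{\min})>F_a(\bar\eta)$, takes $\hat\eta$ to be the \emph{largest} critical point of $F_a$ in $(-1,\bar\eta)$, and deduces \eqref{estimatau} from the monotonicity of $g(\eta)=F(\eta)/(1-ar_{\min}\eta)$ on $(\hat\eta,\bar\eta]$, where $g'=F_a'/(1-ar_{\min}\eta)^2>0$. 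You instead anchor at the largest zero $\eta_0$ of the comparison function $\psi=F-(1-c\eta)\mu$ (with $c=ar_{\min}$), prove $F_a(\eta_0)>F_a(\bar\eta)$ by the explicit cancellation yielding the remainder $c\mu|\bar\eta|(1+c|\bar\eta|)$, and obtain $\hat\eta$ as an interior minimizer of $F_a$ on $[\eta_0,\bar\eta]$; the bound \eqref{estimatau} is then automatic because $F(\hat\eta)<(1-c\hat\eta)\mu$ by the very choice of $\eta_0$. Your route buys a second half that needs only $F(\pm1)\geq L$ (Lemma~\ref{stima} plus Lemma~\ref{lemma-area}) and $\mu<L/(1+c)$, dispensing with the re-use of \eqref{intF} and with formula \eqref{Fa-1}; the paper's route buys a shorter computation, trading your comparison-function bookkeeping for one more application of the integral hypothesis. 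Both arguments (yours and the paper's) tacitly require $a>0$, since $F_a'(\bar\eta)=ar_{\min}\mu$ must be strictly positive; this is consistent with how the lemma is applied.
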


\begin{proof}
To prove the existence of a critical point $\bar \eta \in (-1,0]$ of $F$ satisfying inequality \eqref{critF} we argue indirectly and assume such a critical poinit does not exist. Using that $F(-1)=F(1)\geq L$  (see Lemma \ref{stima}) we thus have 
\[
F(\eta) \geq \frac{L}{1+ar_{\min}} \qquad \forall \eta \in [-1,1].
\]
Now using that $F$ is differentiable on $[0,1)$ and that  $F(\eta) \geq L\eta$ on $[0,1]$ (again by Lemma \ref{stima}), we obtain
\[
\begin{aligned}
\int_0^1 F(\eta)d\eta & > \int_0^1 \max \left \{ \frac{L}{1+ar_{\min}},L\eta\right \}d\eta\ =\ \int_0^{\frac{1}{1+ar_{\min}}} \frac{L}{1+ar_{\min}}d\eta + \int_{\frac{1}{1+ar_{\min}}}^1 L\eta d\eta \\ 
& = \frac{L}{(1+ar_{\min})^2} +\frac{L}{2}\left(1-\frac{1}{(1+ar_{\min})^2} \right) 
= \frac{L}{2}\left(1+\frac{1}{(1+ar_{\min})^2} \right),
\end{aligned}
\] 
contradicting \eqref{intF}. Since $F$ is even we conclude that $F$ admits a minimum point in $(-1,0]$ satisfying   \eqref{critF}.

Now we prove that $F_a$ admits a critical point $\hat \eta \in(-1,\bar \eta)$ satisfying \eqref{estimatau}.
Using  \eqref{Fa-1}, \eqref{intF} and  the fact that $F(-1)=F(1)\geq L$ we first observe that
\begin{equation}\label{valoralto}
F_a(-1)  \geq (1+ar_{\min})L - ar_{\min}L \left(1+ \frac{1}{(1+ar_{\min})^2} \right)
 =  L\left(1-  \frac{ar_{\min}}{(1+ar_{\min})^2}\right).
\end{equation} Moreover, by (\ref{Fadesig}) we have 
\begin{equation}\label{valorbaixo}
F_a(\bar \eta)  \leq F(\bar \eta) < \frac{L}{1+ar_{\min}},
\end{equation} where $\bar \eta \in (-1,0]$ is a minimum point of $F$ satisfying \eqref{critF}. Comparing \eqref{valoralto} and \eqref{valorbaixo} we see that 
\[
F_a(\bar \eta) < F_a(-1).
\] 
Now using that 
\begin{equation}
\label{pos}
F_a'(\bar \eta) = F'(\bar \eta) + ar_{\min} \tau(\bar \eta) = ar_{\min}\tau(\bar \eta)>0,
\end{equation}
we conclude that $F_a$ admits a critical point in $(-1,\bar \eta)$. At such a point the equality $F_a'=F' +ar_{\min}\tau=0$ holds. Let $\hat \eta$ be the largest critical point of $F_a$ in  $(-1,\bar \eta)$.  By (\ref{pos}), we have
\begin{equation}\label{Falinha}
F_a'=F'  +ar_{\min}\tau >0  \ \ \mbox{ on } (\hat \eta,\bar \eta]. 
\end{equation} 

The derivative of the function 
\[
g(\eta):= \frac{F(\eta)}{1-ar_{\min}\eta}, \quad \eta \in (-1,1),
\]
is
\[
g'(\eta) = \frac{F'(\eta) + a r_{\min}\bigl(F(\eta)-\eta F'(\eta)\bigr)}{ (1-ar_{\min}\eta)^2} = \frac{F'(\eta) + ar_{\min}\tau(\eta)}{(1-ar_{\min}\eta)^2}, 
\]
where we have used the expression  for $\tau$ which is given by Lemma \ref{genfun}. From \eqref{Falinha} we deduce that $g'$ is positive on $(\hat{\eta},\bar{\eta}]$, and hence $g$ is strictly increasing on this interval.

Since the derivative of $F_a$ at $\hat{\eta}$ vanishes, we have $F'(\hat \eta)=-ar_{\min}\tau(\hat \eta)$ and hence
\[
\tau(\hat{\eta}) = F(\hat{\eta}) - \hat{\eta} F'(\hat\eta) = F(\hat{\eta}) + a r_{\min} \hat\eta \tau(\hat\eta),
\]
which implies
\[
\tau(\hat \eta) = \frac{F(\hat \eta)}{1-ar_{\min}\hat \eta} = g(\hat{\eta}).
\]
As $g$ is strictly increasing on the interval $(\hat{\eta},\bar{\eta}]$ we obtain
\[
\tau(\hat \eta) = g(\hat{\eta}) < g(\bar{\eta}) = \frac{F(\bar \eta)}{1-ar_{\min}\bar \eta} \leq F(\bar \eta)=\tau(\bar \eta).
\] 
The last inequality follows from  $-1<\bar \eta \leq 0$. This proves \eqref{estimatau}.
\end{proof}

\section{Proof of Theorem \ref{mainthm2}}

We can now prove the second theorem stated in the introduction. 

\begin{mainthm}
Let $S\subset \R^3$ be a surface of revolution and let $a$ be a real number whose absolute value is smaller than $1/r_{\max}$, where $r_{\max}$ denotes the maximal distance of a point in $S$ from the $z$-axis. Then 
\[
\rho^{\rm HT}_{\rm sys}(S, G_a) \leq \rho^{\rm BH}_{\rm sys}(S, G_a) \leq \pi.
\]
The first inequality is an equality if and only if $a=0$. The second one is an equality if and only if $a=0$ and $S$ is Zoll.
\end{mainthm}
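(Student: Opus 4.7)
The first inequality $\rho^{\rm HT}_{\rm sys}(S,G_a) \leq \rho^{\rm BH}_{\rm sys}(S,G_a)$, together with its equality case $a=0$, is immediate from the introduction's observations that $\mathrm{area}_{\rm BH}(S,G_a) = \mathrm{area}(S)$ always and $\mathrm{area}_{\rm HT}(S,G_a) > \mathrm{area}(S)$ strictly whenever $a \neq 0$. My plan is therefore to focus on the substantive part, the Busemann--Hausdorff bound $\rho^{\rm BH}_{\rm sys}(S,G_a) \leq \pi$ with its equality case, and to mirror the proof of Theorem~\ref{mainthm1} using Lemma~\ref{prop1} in place of its Riemannian predecessor.

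I would first reduce to $a \geq 0$ via the conjugacy argument noted at the start of Section~6, and then assume $a > 0$ since $a = 0$ is precisely Theorem~\ref{mainthm1}. Since $\mathrm{area}_{\rm BH}(S,G_a) = \mathrm{area}(S) = \mathrm{vol}(T^1 S)/(2\pi)$ (the contact volume of the underlying Riemannian metric), the task becomes to exhibit a closed $G_a$-geodesic whose squared Finsler length is strictly smaller than $\mathrm{vol}(T^1 S)/2$. There are two natural candidates, and they fit together precisely.

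The first candidate is the equator $P_{s_0}$ traversed in the direction of the wind: a direct application of \eqref{explicit_G_a} (at $A=1$, $B=0$, $r=r_{\min}$) gives its Finsler length as $L_a := L/(1+ar_{\min})$. If $L_a^2 < \mathrm{vol}(T^1 S)/2$, we are done. Otherwise, combining $L_a^2 \geq \mathrm{vol}(T^1 S)/2$ with the general lower bound \eqref{disvol} rearranges into precisely the hypothesis \eqref{intF} of Lemma~\ref{prop1}; applying that lemma then yields a critical point $\hat\eta$ of $F_a$, a corresponding fixed point of $\varphi_a$, and hence a closed $G_a$-geodesic $\gamma$ of Finsler length $\tau(\hat\eta) < F(\bar\eta) =: \mu$.

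A key observation rules out the pathological sub-case: since $a > 0$, the threshold $L(1+(1+ar_{\min})^{-2})/2$ in \eqref{intF} is strictly less than $L$, so the constant function $F \equiv L$ would violate \eqref{intF}; hence in this second regime $F$ is not constant. The final portion of the proof of Theorem~\ref{mainthm1} then applies word for word: Lemma~\ref{stima} together with smoothness of $F$ at $\mu/L$ yields $\int_0^1 F\,d\eta > \mu + (L-\mu)^2/(2L)$, which via \eqref{disvol} gives $\mathrm{vol}(T^1 S) > 2\mu^2$, so $\ell(\gamma)^2 = \tau(\hat\eta)^2 < \mu^2 < \mathrm{vol}(T^1 S)/2$. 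In either case the bound is strict whenever $a > 0$, and therefore $\rho^{\rm BH}_{\rm sys}(S,G_a) = \pi$ forces $a = 0$, after which Theorem~\ref{mainthm1} forces $S$ to be Zoll. The step requiring the most care is the precise compatibility of the two cases at the boundary $L_a^2 = \mathrm{vol}(T^1 S)/2$, but by design this boundary coincides with equality in \eqref{intF}, so no gap appears between them.
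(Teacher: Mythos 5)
Your proposal is correct and follows essentially the same route as the paper: handle the first inequality via the polar-body area comparison, reduce to $a>0$, use the equator in the wind direction of Finsler length $L/(1+ar_{\min})$ as the first candidate, and otherwise derive hypothesis \eqref{intF} from \eqref{disvol} so that Lemma~\ref{prop1} produces the critical point $\hat\eta$ of $F_a$ with $\tau(\hat\eta)<\mu<L$, after which the integral estimate $\int_0^1 F\,d\eta>\mu+(L-\mu)^2/(2L)$ closes the argument exactly as in the paper. Your extra remark that \eqref{intF} rules out $F\equiv L$ is true but not needed, since Lemma~\ref{prop1} already delivers $\mu<L/(1+ar_{\min})<L$ directly.
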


\begin{proof}
The first inequality follows from the inequality
\[
\mathrm{area}_{\rm HT}(S, G_a) \geq \mathrm{area}_{\rm BH}(S, G_a) = \mathrm{area} (S)
\]
which is discussed in the introduction. This inequality is an equality if and only if $a=0$.

Since we already know that for $a=0$ the systolic ratio of $S$ does not exceed $\pi$ and equals $\pi$ if and only if $S$ is Zoll, it is enough to prove the strict inequality
\begin{equation}\label{ell}
\ell_{\rm min}(S, G_a)^2 < \pi \, {\rm area}_{\rm BH}(S, G_a) = \pi \, {\rm area}(S) = \frac{{\rm vol}(T^1S)}{2}
\qquad \mbox{for all}\ \ \  a\neq 0.
\end{equation} 
As the geodesic flows on $S_a$ and $S_{-a}$ are conjugate, we can assume that $a>0$. When parametrized in the direction of $\partial/\partial \theta$, the equator of minimal radius $P_0$ is a  closed geodesic of $G_a$ with length
\[
\ell_0:=\frac{L}{1+ar_{\min}}\geq \ell_{\rm min}(S, G_a).
\]
We may assume that 
\[
\ell_0^2 \geq \frac{{\rm vol}(T^1S)}{2},
\] 
otherwise \eqref{ell} trivially holds.
The inequality \eqref{disvol} thus gives
\begin{equation}
\label{desigint1}
2\ell_0^2 = \frac{2L^2}{(1+ar_{\min})^2} \geq  {\rm vol}(T^1S) \geq 4L\int_0^1 F(\eta) d\eta -2L^2.
\end{equation}
In particular 
\[
\int_0^1 F(\eta)d\eta \leq \frac{L}{2} \left(1+\frac{1}{(1+ar_{\min})^2} \right).
\]
This inequality allows us to apply Lemma \ref{prop1} and gives us a minimum point $\bar \eta \in (-1,0]$ of $F$ such that 
\begin{equation}\label{con1}
\mu:=F(\bar \eta) < \frac{L}{1+ar_{\min}}<L.
\end{equation}
Moreover, this lemma also gives us a critical point $\hat \eta \in (-1,\bar \eta)$ of $F_a$ such that 
\begin{equation}\label{con2}
\tau(\hat \eta) <\mu.
\end{equation} Notice that $\hat \eta$ corresponds to a closed geodesic $\hat \gamma$ of $G_a$ whose length is 
\[
\ell(\hat \gamma) = \tau(\hat \eta).
\]
The existence of a minimum point $\bar \eta$ of $F$ satisfying  \eqref{con1}, together with the inequality $F(\eta)\geq L |\eta|$ from Lemma \ref{stima} and the differentiability of $F$, implies that 
\[
\int_0^1 F(\eta) d\eta >\int_0^1 \max\{\mu,L\eta\} \, d\eta =  \mu + \frac{1}{2} \frac{(L-\mu)^2}{L}.  
\] 
Hence \eqref{desigint1} and \eqref{con2} give
\[
{\rm vol}(T^1S) \geq 4L\left(\mu +  \frac{1}{2} \frac{(L-\mu)^2}{L} \right)-2L^2 = 2\mu^2  > 2\tau(\hat \eta)^2 = 2\ell(\hat \gamma)^2,
\] 
which implies
\[
\ell_{\min}(S,G_a)^2 < \frac{\mathrm{vol}(T^1S)}{2}
\]
proving \eqref{ell}. 
\end{proof}

%\bibliographystyle{amsalpha}
%\bibliography{../../biblio/nonlinear}

\begin{thebibliography}{ABHS18}

\bibitem[ABHS17]{abhs17}
A.~Abbondandolo, B.~Bramham, U.~L. Hryniewicz, and P.~A.~S. Salom{\~a}o,
  \emph{A systolic inequality for geodesic flows on the two-sphere}, Math. Ann.
  \textbf{367} (2017), 701--753.

\bibitem[ABHS18]{abhs18}
A.~Abbondandolo, B.~Bramham, U.~L. Hryniewicz, and P.~A.~S. Salom{\~a}o, \emph{Sharp systolic inequalities for {R}eeb flows on the
  three-sphere}, Invent. Math. \textbf{211} (2018), 687--778.

\bibitem[Bal06]{bal06}
F.~Balacheff, \emph{Sur la systole de la sph\`ere au voisinage de la m\'etrique
  standard}, Geom. Dedicata \textbf{121} (2006), 61--71.

\bibitem[Bes78]{bes78}
A.~L. Besse, \emph{Manifolds all of whose geodesics are closed}, Springer,
  1978.
  
\bibitem[Cro88]{cro88}
C.~B. Croke, \emph{Area and length of the shortest closed geodesic}, J.
  Differential Geom. \textbf{18} (1988), 1--21.
    
\bibitem[CK03]{ck03}
C.~B. Croke and M.~Katz, \emph{Universal volume bounds in {R}iemannian
  manifolds}, Surveys in differential geometry, {V}ol.\ {VIII} ({B}oston, {MA},
  2002), Surv. Differ. Geom., vol.~8, Int. Press, Somerville, MA, 2003,
  pp.~109--137.  

\bibitem[CK94]{ck94}
C.~B. Croke and B.~Kleiner, \emph{Conjugacy and rigidity for manifolds with a
  parallel vector field}, J. Differential Geom. \textbf{39} (1994), 659--680.


\bibitem[GG81]{gg81}
D.~Gromoll and K.~Grove, \emph{On metrics on {$S^2$} all of whose geodesics are
  closed}, Invent. Math. \textbf{65} (1981), 175--177.

\bibitem[Gui76]{gui76}
V.~Guillemin, \emph{The {R}adon transform on {Z}oll surfaces}, Adv. Math.
  \textbf{22} (1976), 85--119.

\bibitem[Rot06]{rot06}
R.~Rotman, \emph{The length of a shortest closed geodesic and the area of a
  2-dimensional sphere}, Proc. Amer. Math. Soc. \textbf{134} (2006),
  3041--3047.

\bibitem[Sch14]{sch14}
R.~Schneider, \emph{Convex bodies: {T}he {B}runn-{M}inkowski theory}, second
  expanded edition ed., Encyclopedia of Mathematics and its Applications,
  Cambridge University Press, 2014.

\bibitem[She01]{she01}
Z.~Shen, \emph{Lectures on {F}insler geometry}, World Scientific Publishing,
  2001.

\bibitem[Wei74]{wei74}
A.~Weinstein, \emph{On the volume of manifolds all of whose geodesics are
  closed}, J. Differential Geom. \textbf{9} (1974), 513--517.

\bibitem[Zer31]{zer31}
E.~Zermelo, \emph{{\"U}ber das {N}avigationsproblem bei ruhender oder
  ver\"anderlicher {W}indverteilung}, ZAMM Z. Angew. Math. Mech. \textbf{11}
  (1931).

\bibitem[Zol03]{zol03}
O.~Zoll, \emph{Ueber {F}l\"achen mit {S}charen geschlossener geod\"atischer
  {L}inien}, Math. Ann. \textbf{57} (1903), 108--133.

\end{thebibliography}

\providecommand{\bysame}{\leavevmode\hbox to3em{\hrulefill}\thinspace}
\providecommand{\MR}{\relax\ifhmode\unskip\space\fi MR }
% \MRhref is called by the amsart/book/proc definition of \MR.
\providecommand{\MRhref}[2]{%
  \href{http://www.ams.org/mathscinet-getitem?mr=#1}{#2}
}
\providecommand{\href}[2]{#2}

\end{document}